\documentclass{article}

 \PassOptionsToPackage{numbers, compress}{natbib}
\usepackage{amsmath}   % equation environments, \operatorname, \text, \underbrace
\usepackage{amsthm}    % theorem, lemma, proposition, proof environments
\usepackage{amssymb}   % \mathbb, \odot
\usepackage{empheq}    % for \boxed inside equation environments
\newtheorem{theorem}{Theorem}
\newtheorem{lemma}[theorem]{Lemma}

\usepackage{subcaption}
  \usepackage[preprint]{neurips_2026}

\usepackage[utf8]{inputenc} % allow utf-8 input
\usepackage[T1]{fontenc}    % use 8-bit T1 fonts
\usepackage{hyperref}       % hyperlinks
\usepackage{url}            % simple URL typesetting
\usepackage{booktabs}       % professional-quality tables
\usepackage{amsfonts}       % blackboard math symbols
\usepackage{nicefrac}       % compact symbols for 1/2, etc.
\usepackage{microtype}      % microtypography
\usepackage{xcolor}         % colors
\usepackage{algorithm}
\usepackage{algpseudocode}
\usepackage{float}

\title{Fast and Stable Gradient Approximation for Bilinear Forms of Hermitian Matrix Functions}

\author{%
  Navjot Singh \\
  Lawrence Berkeley National Laboratory\\
  Berkeley, CA 94720 \\
  \texttt{nsingh2@lbl.gov} \\
  \And
  Kipton Barros \\
  Los Alamos National Laboratory \\
  Los Alamos, NM 87545 \\
  \texttt{kbarros@lanl.gov} \\
  \And
  Xiaoye Sherry Li\\
  Lawrence Berkeley National Laboratory\\
  Berkeley, CA 94720 \\
  \texttt{xsli@lbl.gov} 
}

\begin{document}

\maketitle

% Abstract
\begin{abstract}
Objectives involving bilinear forms \(u^\top f(A(\theta))v\) for Hermitian \(A\) arise widely in scientific computing and probabilistic machine learning. For large matrices, Lanczos efficiently approximates these quantities, but differentiating them with respect to \(\theta\) is challenging. Existing approaches either backpropagate through the Lanczos recurrence, requiring reorthogonalization for stability, or apply Arnoldi to an augmented block matrix of twice the original size. Both introduce extra computation and orthogonalization costs that can limit performance on modern hardware.
We propose a forward-only gradient approximation that reuses the Lanczos pass and adds 
very minimal overhead in most cases.
% only matrix--matrix products and computations with the small tridiagonal matrix.
We prove that its error is proportional to the Lanczos residual norm, the same quantity controlling the forward approximation. 
% Experiments show that the method remains stable without reorthogonalization where adjoint-based alternatives become unstable, and is faster than existing state-of-the-art approaches.
Whereas a traditional adjoint-based calculation would be unstable without reorthogonalization, the new method appears unconditionally stable in our tests. It is also faster than existing state-of-the-art approaches.
\end{abstract}

\section{Introduction}
Large-scale learning, inverse problems, and scientific computing involve objectives built from \emph{functions of 
%parameter-dependent
parameterized matrices}, rather than from individual matrix entries. Examples include log-determinants in Gaussian processes and marginal-likelihood optimization, matrix exponentials in dynamical systems and network analysis, and time-evolution operators in quantum models \citep{rasmussen2006gaussian,dong2017scalable,immer2021scalable,saad1992exponential,hochbruck1997krylov,benzi2020,banchi2023learning}. In these applications, the matrix \(A(\theta)\) is often too large to form, diagonalize, or differentiate as a dense object, but matrix--vector products \(v\mapsto A(\theta)v\) and their differentials are available through sparse linear algebra, structured operators, or automatic differentiation. Krylov methods, and in particular the Lanczos method for Hermitian matrices, are therefore a natural computational primitive for matrix-function actions and quadratic forms \citep{gallopoulos1992parabolic,beckermann2009faber,ubaru2017fast,chen2023krylov,davies2021krylov}: an \(m\)-step Lanczos iteration approximates quadratic forms $u^\top f(A(\theta))u$
by replacing the large matrix function with a small tridiagonal matrix function. Differentiating this approximation, however, is not straightforward. Exact Fr\'echet-derivative formulas are well understood \citep{najfeld1995,almohy2009}, and recent Krylov methods approximate Fr\'echet-derivative actions without forming the full derivative \citep{kressner2019,kandolf2023,crouzeix2020,kressner2026}. Alternatively, one may differentiate through the Lanczos iterations using reverse-mode automatic differentiation or custom adjoint recurrences. Although these approaches are principled, 
%but they either introduce an additional derivative Krylov problem or require a backward pass through the Lanczos recurrence, whose finite-precision behavior can be sensitive to loss of orthogonality.
they do not inherit the nice numerical stability properties of the forward Lanczos quadrature calculation. In finite numerical precision, such instability of the gradient calculation may force the use of an expensive reorthogonalization procedure.

We propose a forward-only approximate gradient for
\[
    \phi(\theta;u)=u^\top f(A(\theta))u,
\]
where \(A(\theta)\) is Hermitian and the forward value is computed by Lanczos. The central idea is to differentiate only the small projected Lanczos problem, and then lift this sensitivity back to the parameters through vector--Jacobian products of the original matrix--vector product map \((\theta,v)\mapsto A(\theta)v\). Our contributions are:
\begin{itemize}
    \item We derive an approximate gradient for Lanczos quadratic-form approximations that requires only the forward Lanczos calculation, some computation involving the small tridiagonal matrix, and the usual differentiation of matrix-vector products.
    \item We prove that Lanczos basis-variation terms can be ignored up to an error governed by the final Lanczos residual coefficient \(\beta_m\), which therefore vanishes when the Krylov subspace is invariant.
    \item We show empirically that the estimator inherits the numerical stability of the forward Lanczos approximation, remaining reliable without full reorthogonalization in settings where naive backpropagation or adjoint-based methods break down.
    \item We demonstrate the approach on log-determinant, network-sensitivity, and Hamiltonian-learning experiments, where it improves upon the accuracy or matches it while being more efficient.
\end{itemize}

The rest of the paper is organized as follows. Section~\ref{sec:problem} formulates the problem setup, establishing the relationship between other objectives and the quadratic form, as well as the model assumptions. Section~\ref{sec:background} provides brief background on the methods used to tackle the problem. Section~\ref{sec:grad_approx} contains our main contributions and derives the gradient approximation. Section~\ref{sec:experiments} evaluates our method against various benchmarks.

\section{Problem setup and model assumption}
\label{sec:problem}
\subsection{Problem setup}

Let $\theta \in \Theta$ denote the model parameters, and let $
A(\theta) \in \mathbb{R}^{n \times n}$ be a symmetric matrix depending smoothly on $\theta$. All the proofs and algorithms in this manuscript generalize readily to $A(\theta) \in \mathbb{C}^{n \times n}$ Hermitian matrices, but we restrict to real
matrices and vectors for simplicity. We assume that the spectrum of $A(\theta)$ is contained in an interval
$\mathcal I \subset \mathbb{R}$ for all $\theta$ of interest, and that
\begin{equation*}
f : \mathcal I \to \mathbb{R}
\end{equation*}
is a scalar function for which the matrix function $f(A(\theta))$ is well defined.

 % Comment: Add we consider real, everything acn be made complex

Our primary object of interest is the quadratic form
\begin{equation}
\phi(\theta;u) := u^\top f(A(\theta))u,
\qquad u \in \mathbb{R}^n.
\label{eq:problem_quadratic_form}
\end{equation}
We focus on \eqref{eq:problem_quadratic_form} because it is a basic building block for several
quantities of interest involving matrix functions. In particular, trace objectives can be written as
expectations of quadratic forms,
\begin{equation}
\operatorname{tr}(f(A(\theta)))
= \mathbb E_z\!\left[z^\top f(A(\theta)) z\right],
\qquad \mathbb E[zz^\top]=I,
\label{eq:problem_trace_identity}
\end{equation}
for stochastic $z$~\citep{hutchinson1989stochastic}. Bilinear forms can be recovered from quadratic forms by polarization,
\begin{equation}
u^\top f(A(\theta))v
=
\frac14\Big((u+v)^\top f(A(\theta))(u+v)
-
(u-v)^\top f(A(\theta))(u-v)\Big).
\label{eq:problem_polarization}
\end{equation}
Thus, once we can compute gradients of \eqref{eq:problem_quadratic_form}, the same machinery also
extends to stochastic trace estimation and more general objectives involving bilinear forms. Our goal is therefore to compute the gradient $\nabla_\theta \phi(\theta;u)$  with respect to $\theta$.

\subsection{Matrix-free access model}

We are interested in the large-scale regime in which $A(\theta)$ is too expensive to
form densely or factorize explicitly. Instead, we assume matrix-free access through
differentiable matrix--vector products:
\begin{equation}
    \mathcal{A}(\theta,u) = A(\theta)u .
    \label{eq:problem_matvec_access}
\end{equation}
This setting covers sparse and structured operators arising from discretized PDEs and
other scientific-computing problems \citep{saad2003iterative,elman2014finite},
precision and covariance operators in spatial statistics and Gaussian-process models
\citep{rue2005gaussian,rasmussen2006gaussian}, generalized Gauss--Newton and other
curvature matrices in large-scale optimization \citep{schraudolph2002fast,martens2010deep},
and more generally any linear operator for which matrix--vector products are inexpensive
even when $n$ is large.

For gradient computation, we additionally assume that the map
$\mathcal{A}(\theta,u)$ in \eqref{eq:problem_matvec_access} is differentiable with
respect to $\theta$. Thus, for fixed $u$, we can evaluate Jacobian--vector or
vector--Jacobian products associated with $\theta\mapsto A(\theta)u$. Equivalently, we
assume access to differential matrix--vector products without materializing
$\partial A(\theta)/\partial\theta$.

\section{Background}
\label{sec:background}
\subsection{Lanczos approximation of matrix functions}
\label{subsec:bg_lanczos}

Lanczos iteration is a standard matrix-free tool for approximating
matrix-function-vector products \(f(A)u\) when \(A\) is large and symmetric
\citep{golub2013matrix,higham2008functions}. Starting from \(v_1=u/\|u\|\),
\(m\) Lanczos steps construct an orthonormal basis \(V=[v_1,\ldots,v_m]\) for
the Krylov subspace
\[
    \mathcal{K}_m(A,u)=\operatorname{span}\{u,Au,\ldots,A^{m-1}u\},
\]
together with a symmetric tridiagonal matrix \(T\) satisfying
\begin{equation}
    A V = V T + \beta_m v_{m+1} e_m^\top .
    \label{eq:bg_lanczos_relation}
\end{equation}
The resulting Krylov approximation is
\begin{equation}
    f(A)u
    \approx
    \|u\| V f(T)e_1 ,
    \label{eq:bg_lanczos_mvf}
\end{equation}
so the nonlinear function is evaluated only on the small projected matrix \(T\). The
same projection gives the quadratic-form approximation
\begin{equation}
    u^\top f(A)u
    \approx
    \|u\|^2 e_1^\top f(T)e_1 .
    \label{eq:bg_lanczos_quadratic}
\end{equation}
In exact arithmetic, the Lanczos vectors are mutually orthogonal and \(T\)
contains no repeated Ritz values. In finite precision, however, the three-term
recurrence may lose orthogonality as Ritz values converge, which can lead to
``ghost'' copies of already converged eigenvalues in the $T$ matrix
\citep{paige1971computation,paige1972computational,paige1976error,parlett1998symmetric}.
This loss of orthogonality does not necessarily make Lanczos-based quadrature
unstable: the computed recurrence can often still be interpreted as a valid
Gauss quadrature rule for a nearby measure, and the associated quadratic-form
estimates are typically robust \citep{golub2009matrices,meurant2006lanczos}.
Nevertheless, ghost Ritz values can slow or distort convergence. Full
reorthogonalization restores stability but significantly increases the cost of each Lanczos
step as a QR decomposition with cost $O(nk^2)$ is needed at the $k$th iteration; cheaper alternatives include selective or partial reorthogonalization,
which maintain orthogonality only to the extent needed to suppress spurious
copies of converged Ritz values \citep{parlett1979lanczos,simons1984analysis}.

When \(A=A(\theta)\), one direct way to compute gradients is to differentiate the
finite Lanczos approximation itself. In automatic differentiation frameworks, this
amounts to backpropagating through the Krylov iteration. Kramer et
al.~\citep{kramer2024gradients} show that this can be inefficient for Lanczos and
Arnoldi iterations, especially when stable implementations require reorthogonalization.

\subsection{Gradients of matrix functions}
\label{subsec:bg_matrix_function_gradients}

Gradients of objectives involving \(f(A(\theta))\) are naturally expressed using
Fr\'echet derivatives~\citep{higham2008functions}. For a matrix function \(f\), the Fr\'echet derivative at \(A\) is
the linear map \(L_f(A,\cdot)\) defined by 
\begin{equation}
    f(A+\varepsilon E)
    =
    f(A)+\varepsilon L_f(A,E)+o(\varepsilon),
    \qquad \varepsilon\to 0 .
    \label{eq:bg_frechet_def}
\end{equation}
For symmetric \(A=Q\Lambda Q^\top\), it admits the divided-difference representation
\citep{higham2008functions}
\begin{equation}
    L_f(A,E)
    =
    Q\bigl(F(\Lambda)\circ (Q^\top E Q)\bigr)Q^\top ,
    \label{eq:bg_hadamard_frechet}
\end{equation}
where $\circ$ is the elementwise Hadamard product, and
\begin{align}
    F(\lambda_i,\lambda_j)
    =
    \begin{cases}
        \dfrac{f(\lambda_i)-f(\lambda_j)}{\lambda_i-\lambda_j}, & i\neq j,\\[1.2ex]
        f'(\lambda_i), & i=j.
    \end{cases}
\label{eq:bg_divided_difference}
\end{align}
This formula is useful analytically, but it requires spectral information about the full
matrix and dense \(n\times n\) quantities, making it impractical as a large-scale
algorithm.

A common matrix-free alternative is based on the block triangular identity
\citep{higham2008functions,najfeld1995derivatives}
\begin{equation}
    f\!\left(
    \begin{bmatrix}
        A & E\\
        0 & A
    \end{bmatrix}
    \right)
    \begin{bmatrix}
        0\\ u
    \end{bmatrix}
    =
    \begin{bmatrix}
        L_f(A,E)u\\
        f(A)u
    \end{bmatrix}.
    \label{eq:bg_block_action}
\end{equation}
For \(E=\partial A(\theta)/\partial\theta\), this computes the derivative action
\(L_f(A,E)u\) and the primal action \(f(A)u\) simultaneously. However, even if \(A\) is
symmetric, the augmented matrix is generally nonsymmetric and may have less favorable
spectral properties, so the Lanczos structure is lost and one typically uses Arnoldi.
Kressner and Oehme address this issue by modifying Arnoldi to better preserve the block
triangular structure~\citep{kressner2026}.

Another approach is to define a custom reverse-mode rule for the Lanczos iteration.
Kramer et al.~\citep{kramer2024gradients} formulate Lanczos and Arnoldi iterations as
algebraic constraints and apply the adjoint method to those constraints. For a scalar
loss $\rho$ depending on the Lanczos outputs, including the Lanczos vectors and the
tridiagonal coefficients, their adjoint system takes as input the corresponding output
sensitivities and solves a backward recurrence for adjoint variables. The resulting matrix
gradient has the form
\begin{equation}
    \nabla_A \rho
    =
    \sum_{j=1}^m \lambda_j v_j^\top ,
    \label{eq:bg_lanczos_adjoint_matrix_gradient}
\end{equation}
where the adjoint vectors $\lambda_j$ are obtained from the backward recurrence. If
$A=A(\theta)$ is accessed through differentiable matrix-vector products, parameter
gradients are then computed by contracting this matrix gradient with
$\partial_\theta A(\theta)$, or equivalently through vector-Jacobian products of the
matrix-vector product map.

For the Lanczos approximation
of the quadratic objective, the required output sensitivities include the
derivatives of the small projected objective with respect to the diagonal and
off-diagonal entries of $T$. These sensitivities are obtained by applying the
Fr\'echet derivative formula to the scalar map
$T\mapsto \|u\|^2 e_1^\top f(T)e_1$, and they initialize the backward Lanczos
recurrence.

Adjoint methods avoid generic backpropagation through the implemented loop, but they
still differentiate the Lanczos process by running a backward adjoint recurrence. This
motivates the question we address next: whether the projected sensitivity of the small
Lanczos objective can be used without differentiating through the Lanczos recurrence
itself.

\section{Gradient approximation of quadratic forms of matrix functions}
\label{sec:grad_approx}
We derive an efficient approximate gradient of the Lanczos estimate
\begin{equation}
    \widehat{\phi}(\theta;u)
    =
    \|u\|^2 e_1^\top f(T)e_1
    \label{eq:method_lanczos_quad}
\end{equation}
with respect to $\theta$, without backpropagating through the Lanczos recurrence.
Here $T$ is the tridiagonal matrix produced by running $m$ steps of Lanczos on
$A(\theta)$ with initial vector $v_1=u/\|u\|$. For clarity, we treat $\theta$ as a
scalar parameter; the extension to multiple parameters follows trivially. Throughout, we write $A=A(\theta)$, and all differentials are
with respect to $\theta$.

\subsection{Sensitivity with respect to the projected matrix}
\label{subsec:method_T_sensitivity}
\label{sec:sensitivity}

The first ingredient is the derivative of the scalar projected quantity with respect to
its small matrix argument. This is a standard consequence of the Fr\'echet derivative of
a matrix function.

\begin{theorem}[Projected sensitivity]
\label{thm:projected_sensitivity}
Let $T\in\mathbb{R}^{m\times m}$ be symmetric with eigendecomposition
$T=Q\Lambda Q^\top$, let $c=Q^\top e_1$, and let $F=F(\Lambda)$ be the
divided-difference matrix from \eqref{eq:bg_divided_difference}. For
\[
    \widehat{\phi}(T)=\|u\|^2 e_1^\top f(T)e_1,
\]
the differential with respect to $T$ is
\begin{equation}
    d\widehat{\phi}
    =
    \operatorname{tr}(G^\top dT),
    \label{eq:method_dphi_G}
\end{equation}
where
\begin{equation}
    G
    =
    \|u\|^2 Q\bigl((cc^\top)\circ F\bigr)Q^\top .
    \label{eq:method_G}
\end{equation}
\end{theorem}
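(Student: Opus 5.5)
The plan is to apply the Fréchet-derivative representation \eqref{eq:bg_hadamard_frechet} to the small symmetric matrix $T$ and then rewrite the resulting scalar as a trace pairing against $dT$. By the definition \eqref{eq:bg_frechet_def}, a perturbation $dT$ produces $d f(T) = L_f(T,dT)$. Since $\|u\|^2$ and $e_1$ do not depend on $T$, this gives
\[
    d\widehat{\phi} = \|u\|^2 e_1^\top L_f(T,dT) e_1 .
\]
Because the perturbed matrix $T+\varepsilon\,dT$ has to stay in the domain where \eqref{eq:bg_hadamard_frechet} holds, I will take $dT$ symmetric. In the Lanczos application $dT$ is symmetric tridiagonal, so this costs nothing.

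Next I substitute \eqref{eq:bg_hadamard_frechet} with $T=Q\Lambda Q^\top$:
\[
    d\widehat{\phi}=\|u\|^2 e_1^\top Q\bigl(F\circ(Q^\top dT\,Q)\bigr)Q^\top e_1
    =\|u\|^2\, c^\top\bigl(F\circ M\bigr)c,
    \qquad M:=Q^\top dT\,Q .
\]
Then I use the elementary Hadamard identity
\[
    c^\top(F\circ M)c=\sum_{i,j}c_i F_{ij} M_{ij} c_j=\operatorname{tr}\bigl(((cc^\top)\circ F)^\top M\bigr).
\]
Finally I move $Q$ across with the cyclic property of the trace: $\operatorname{tr}(B^\top Q^\top dT\,Q)=\operatorname{tr}((QBQ^\top)^\top dT)$. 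Setting $B=\|u\|^2 (cc^\top)\circ F$ gives $G=QBQ^\top$, which is exactly \eqref{eq:method_G}. Note that $F$ is symmetric, so $G$ is symmetric and $G^\top=G$.

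The only delicate point is repeated eigenvalues, in particular the ghost Ritz values mentioned in Section~\ref{subsec:bg_lanczos}. There the divided difference must be read as $f'(\lambda_i)$ whenever $\lambda_i=\lambda_j$, including when $i\neq j$. I will also need to check that the representation \eqref{eq:bg_hadamard_frechet} does not depend on the choice of eigenbasis within an eigenspace. I would handle this by citing the standard result \citep{higham2008functions} that the Daleckii–Krein formula holds for $f\in C^1$ on $\mathcal I$ with this convention. Alternatively, I would argue by continuity from matrices with distinct eigenvalues.

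No other step is an obstacle; the remaining steps are algebraic identities.
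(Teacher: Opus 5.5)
Your proposal is correct and follows essentially the same route as the paper's proof in Appendix~\ref{app:sensitivity}: substitute the Daleckii--Krein representation for $L_f(T,dT)$, rewrite $c^\top(F\circ Q^\top dT\,Q)c$ as $\operatorname{tr}\bigl(((cc^\top)\circ F)^\top Q^\top dT\,Q\bigr)$, and move $Q$ across by cyclicity of the trace. Your extra remarks, restricting to symmetric $dT$ and reading $F_{ij}=f'(\lambda_i)$ when $\lambda_i=\lambda_j$ with $i\neq j$, are points the paper leaves implicit, and you handle them correctly.
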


Theorem~\ref{thm:projected_sensitivity} states that $G\in\mathbb{R}^{m\times m}$ is the
sensitivity of the scalar Lanczos estimate with respect to perturbations of the tridiagonal
matrix $T$ and is symmetric. The full derivation is given in Appendix~\ref{app:sensitivity}.

\subsection{Gradient approximation}
\label{subsec:method_gradient_approx}
\label{sec:gm}

Since $T=V^\top A V$, its full differential is
\begin{equation}
    dT
    =
    \underbrace{V^\top dA\,V}_{\text{direct}}
    +
    \underbrace{dV^\top A V + V^\top A\,dV}_{\text{basis variation}} .
    \label{eq:method_dT_full}
\end{equation}
The direct term captures the explicit dependence of $T$ on $A$, while the remaining
terms capture the dependence of the Lanczos basis $V$ on $\theta$. We approximate
\begin{equation}
    dT \approx V^\top dA\,V,
    \label{eq:method_dT_approx}
\end{equation}
dropping the basis variation terms. Substituting \eqref{eq:method_dT_approx} into \eqref{eq:method_dphi_G} yields
\begin{equation}
    d\widehat{\phi}
    \approx
    \operatorname{tr}\!\left(G V^\top dA\,V\right)
    \label{eq:method_grad_approx}
\end{equation}

The following theorem shows that the terms omitted in \eqref{eq:method_dT_approx} are
controlled by the Lanczos residual coefficient.

\begin{theorem}[Error from ignoring basis variation]
\label{thm:method_error}
Let $V,T,\beta_m,v_{m+1}$ satisfy the Lanczos relation
\eqref{eq:bg_lanczos_relation}, and let $G$ be defined by \eqref{eq:method_G}. Write
the differential of the Lanczos basis as
\[
    dV = VS + V^\perp N,
\]
where $S$ is skew-symmetric, $V^\perp$ spans the orthogonal complement of
$\operatorname{span}(V)$, and the first column of $V^\perp$ is $v_{m+1}$. Define
\[
    \eta = N^\top e_1^{(n-m)} .
\]
Then the exact differential of the Lanczos estimate satisfies
\begin{equation}
    d\widehat{\phi}
    =
    \operatorname{tr}\!\left(VGV^\top dA\right)
    +
    2\beta_m e_m^\top G\eta .
    \label{eq:method_error_formula}
\end{equation}
Consequently, the error in \eqref{eq:method_grad_approx} is proportional to the
Lanczos residual coefficient $\beta_m$ and vanishes when the Krylov subspace is
$A$-invariant.
\end{theorem}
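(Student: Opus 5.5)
We substitute the full differential \eqref{eq:method_dT_full} into the projected sensitivity of Theorem~\ref{thm:projected_sensitivity}, $d\widehat{\phi}=\operatorname{tr}(G^\top dT)=\operatorname{tr}(G\,dT)$, using that $G$ is symmetric. Linearity splits this into a direct part and a basis-variation part. The direct part, $\operatorname{tr}(GV^\top dA\,V)=\operatorname{tr}(VGV^\top dA)$, follows from cyclicity of the trace and is exactly the first term of \eqref{eq:method_error_formula}. Everything therefore reduces to showing
\[
    \operatorname{tr}\!\bigl(G(dV^\top AV+V^\top A\,dV)\bigr)=2\beta_m e_m^\top G\eta .
\]

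First, because $G$ is symmetric, the two basis terms have the same contribution:
\[
    \operatorname{tr}(G\,dV^\top AV)=\operatorname{tr}\bigl((G\,dV^\top AV)^\top\bigr)=\operatorname{tr}(V^\top A\,dV\,G)=\operatorname{tr}(GV^\top A\,dV).
\]
The basis part is thus $2\operatorname{tr}(GV^\top A\,dV)$. Next we use the Lanczos relation \eqref{eq:bg_lanczos_relation}, transposed via the symmetry of $A$ and $T$:
\[
    V^\top A = TV^\top+\beta_m e_m v_{m+1}^\top .
\]
Substituting $dV=VS+V^\perp N$ together with $V^\top V=I$ and $V^\top V^\perp=0$ gives
\[
    V^\top A\,dV = TS + TV^\top V^\perp N+\beta_m e_m v_{m+1}^\top V S+\beta_m e_m v_{m+1}^\top V^\perp N .
\]
The middle two terms vanish. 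In the last term, $v_{m+1}^\top V^\perp=(e_1^{(n-m)})^\top$ because $v_{m+1}$ is the first column of $V^\perp$. Hence
\[
    V^\top A\,dV = TS+\beta_m e_m (e_1^{(n-m)})^\top N = TS+\beta_m e_m\eta^\top .
\]

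It remains to show that $\operatorname{tr}(GTS)=0$. We expect this to be the main obstacle, since it is where the structure of $G$ matters. The key observation is that $G$ commutes with $T$: both are diagonalized by $Q$, so
\[
    GT=\|u\|^2Q\bigl(((cc^\top)\circ F)\Lambda\bigr)Q^\top
\]
is not automatically symmetric. We need a sharper argument. The symmetric part of $TS$ pairs with $G$ as follows: since $S^\top=-S$,
\[
    \operatorname{tr}(GTS)=\operatorname{tr}\bigl((GTS)^\top\bigr)=-\operatorname{tr}(SGT)=-\operatorname{tr}(GTS)
\]
\emph{provided} $GT$ is symmetric, i.e. $GT=TG$. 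If that holds, $\operatorname{tr}(GTS)=0$ directly. If $GT$ is not symmetric in general, we instead return to the full expression $\operatorname{tr}(G\,dT)$ and use the constraint that $dT$ must remain tridiagonal. Skew $S$ is constrained because $V$ is the Lanczos basis: $dv_1=d(u/\|u\|)=0$ when $u$ does not depend on $\theta$, so $Se_1=0$ and $Ne_1=0$. We then combine $TS-ST$, which is the natural form of the $V$-contribution to $dT$, with $\operatorname{tr}(G[T,S])=\operatorname{tr}([G,T]S)$. We would verify that $[G,T]$ is symmetric (a commutator of symmetric matrices is skew, so $\operatorname{tr}([G,T]S)$ need not vanish), and fall back on invariance arguments using $e_1$-dependence of $G$ if needed. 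Getting this cancellation exactly right is the step we would work out most carefully.

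Finally, we collect the surviving term:
\[
    2\operatorname{tr}(G\beta_m e_m\eta^\top)=2\beta_m\eta^\top G e_m=2\beta_m e_m^\top G\eta .
\]
This yields \eqref{eq:method_error_formula}, and the error in \eqref{eq:method_grad_approx} is the $\beta_m$-proportional term. When $\mathcal{K}_m$ is $A$-invariant, $\beta_m=0$ and the error vanishes.
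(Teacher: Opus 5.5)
Your reduction is correct up to the last step. The symmetrization $\operatorname{tr}(G\,dV^\top AV)=\operatorname{tr}(GV^\top A\,dV)$ and the identity $V^\top A\,dV=TS+\beta_m e_m\eta^\top$ both hold. Since $\operatorname{tr}(G[T,S])=2\operatorname{tr}(GTS)$ for symmetric $G,T$ and skew $S$, your leftover quantity $\operatorname{tr}(GTS)$ is exactly half of the paper's commutator term.

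That term is the only non-routine part of the theorem, and you do not prove it vanishes. Your first route fails because $G$ is \emph{not} diagonalized by $Q$. The matrix $Q^\top GQ=\|u\|^2(cc^\top)\circ F$ has generically nonzero off-diagonal entries $\|u\|^2c_ic_jF(\lambda_i,\lambda_j)$, so $GT\neq TG$. In fact $\operatorname{tr}(GTS)$ is nonzero for a general skew $S$ (see below). Your fallback is not an argument. Tridiagonality of $dT$ plays no role, and ``verify that $[G,T]$ is symmetric'' contradicts your own correct remark that it is skew. You name the right constraint, $Se_1=0$, but never connect it to the structure of $G$.

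The missing idea is to compute the commutator explicitly using the divided-difference identity $F(\lambda_i,\lambda_j)(\lambda_i-\lambda_j)=f(\lambda_i)-f(\lambda_j)$. Entrywise,
$(Q^\top[T,G]Q)_{ij}=\|u\|^2(\lambda_i-\lambda_j)c_ic_jF(\lambda_i,\lambda_j)=\|u\|^2c_ic_j\bigl(f(\lambda_i)-f(\lambda_j)\bigr)$.
Since $Qc=e_1$, this collapses to a rank-two matrix built from $e_1$:
\[
[T,G]=\|u\|^2\bigl(f(T)e_1e_1^\top-e_1e_1^\top f(T)\bigr).
\]
Using skew-symmetry of $S$, it follows that $\operatorname{tr}(GTS)=\tfrac12\operatorname{tr}([G,T]S)=\|u\|^2e_1^\top f(T)Se_1$. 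This is nonzero for generic skew $S$; for example, $S=e_2e_1^\top-e_1e_2^\top$ gives $\|u\|^2f(T)_{12}$. It vanishes precisely because $Se_1=0$, which follows from the fixed starting vector $v_1$.

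This is the content of the paper's vanishing-commutator lemma. In $Q$-coordinates it shows $\mathcal{C}=2(f(\Lambda)c)^\top\widetilde S c$ with $\widetilde S c=0$. With this step supplied, the rest of your proof goes through.
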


\begin{proof}[Proof sketch]
Differentiating $T=V^\top A V$ gives \eqref{eq:method_dT_full}. Since $V^\top V=I$,
the basis differential decomposes as $dV=VS+V^\perp N$ with $S^\top=-S$. Using the
Lanczos relation
\[
    AV = VT+\beta_m v_{m+1}e_m^\top,
\]
one obtains
\[
    dT
    =
    V^\top dA\,V
    +
    [T,S]
    +
    \beta_m(\eta e_m^\top+e_m\eta^\top),
\]
where $[T,S]=TS-ST$. Substituting this expression into
$d\widehat{\phi}=\operatorname{tr}(G^\top dT)$ gives a direct term, a commutator term,
and a boundary term. The commutator term vanishes because the initial Lanczos vector
$v_1=u/\|u\|$ is fixed, which implies $Se_1=0$. The remaining boundary term equals
$2\beta_m e_m^\top G\eta$ since $G$ is symmetric. Full details are given in
Appendix~\ref{app:error}.
\end{proof}

\subsection{Practical implementation}
\label{subsec:method_implementation}

The approximation in \eqref{eq:method_grad_approx} can be evaluated without forming
the dense matrix $VGV^\top$. We present a practical algorithm below to compute the gradients through Lanczos iteration when $dA$ is not given as an explicit matrix.

\paragraph{Step 1: Forward Lanczos pass.}
Run $m$ steps of Lanczos with $A(\theta)$ and initial vector $v_1=u/\|u\|$ to obtain
\[
    V=[v_1,\ldots,v_m]\in\mathbb{R}^{n\times m},
    \qquad
    T\in\mathbb{R}^{m\times m}.
\]

\paragraph{Step 2: Projected sensitivity.}
Compute $T=Q\Lambda Q^\top$, set $c=Q^\top e_1$, form the divided-difference matrix
$F$, and construct
\[
    G
    =
    \|u\|^2 Q\bigl((cc^\top)\circ F\bigr)Q^\top .
\]

\paragraph{Step 3: parameter contractions.}
Let
\[
    W = VG = [w_1,\ldots,w_m].
\]
Then \eqref{eq:method_grad_approx} can be written as
\begin{equation}
    \frac{d\widehat{\phi}}{d\theta}
    \approx
    \sum_{j=1}^{m}
    w_j^\top
    \frac{\partial A(\theta)}{\partial\theta}
    v_j .
    \label{eq:method_grad_vjp}
\end{equation}
For fixed $v_j$, $w_j$ can be applied to the Jacobian
$D_\theta\mathcal{A}(\theta,v_j)$ 
\emph{vector-Jacobian product} (VJP)
$ w_j^\top [D_\theta\mathcal{A}(\theta,v_j)],
$
and its value is exactly the $j$th scalar contraction in
\eqref{eq:method_grad_vjp}. Thus the backward computation requires only $m$ such VJPs
through the same matrix-vector product primitive used in the forward Lanczos pass, and
does not require unrolling or differentiating through the Lanczos recurrence. 

We summarize the above details in Algorithm~\ref{alg:method_gradient}.

\begin{algorithm}[H]
\caption{Approximate gradient of $\phi(\theta;u)=u^\top f(A(\theta))u$}
\label{alg:method_gradient}
\begin{algorithmic}[1]
\Require Parameter $\theta$, vector $u$, Lanczos steps $m$, function $f$ and derivative $f'$
\Ensure Approximate gradient $\nabla_{\theta} \phi(\theta ; u)$
\State Run $m$-step Lanczos on $A(\theta)$ with $v_1=u/\|u\|$ to obtain $V$ and $T$
\State Compute $T=Q\Lambda Q^\top$ and set $c\leftarrow Q^\top e_1$
\State Form the divided-difference matrix $F$ using $f$ and $f'$ as in~\eqref{eq:bg_divided_difference} and form
 \[G \leftarrow \|u\|^2 Q\bigl((cc^\top)\circ F\bigr)Q^\top\]
 \State Compute the product \[W \leftarrow VG\]
\State \Return $\displaystyle
    \sum_{j=1}^{m}
    w_j^\top[D_\theta\mathcal{A}(\theta,v_j)]$
\end{algorithmic}
\end{algorithm}

\paragraph{Complexity and memory.}
Assuming $m$ Lanczos iterations, the dominant large-scale costs are the $m$ matrix-vector products in the forward
Lanczos pass and the $m$ VJPs in \eqref{eq:method_grad_vjp}. The remaining operations
involve only the stored basis and the small projected matrix: The symmetric tridiagonal
eigendecomposition of $T$ costs $O(m^2)$. The dense matrix-matrix multiply $W = VG$ costs $O(nm^2)$ with very low prefactor.
Forming $G$ directly costs $O(m^3)$.
An alternative is to construct $G V$ directly at $O(nm^2)$ cost using optimized dense linear algebra kernels.
Similarly, the adjoint products in \eqref{eq:method_grad_vjp} are independent across
Lanczos steps and can be parallelized or batched when the matrix-vector product
primitive supports it.

\section{Experiments}
\label{sec:experiments}
Our experiments test whether the proposed method gives accurate and efficient
gradients for objectives that reduce to quadratic forms of matrix
functions. We consider three settings: $u^\top \log(K)u$, where we compare with
adjoint-based Lanczos differentiation used in~\cite{kramer2024gradients}; graph-sensitivity objectives involving
$u^\top \exp(A)v$, where we compare with a one-pass block Arnoldi
Fr\'echet-derivative approximation used in~\cite{kressner2026}; and an end-to-end Hamiltonian-learning task,
where we compare with a dense Fr\'echet-derivative reference. All experiments were run on a MacBook Pro with an Apple M4 Pro chip
(12-core CPU: 8 performance cores and 4 efficiency cores), 24 GB unified memory,
running macOS 26.2 on arm64.

\paragraph{Implementation details.}
To ensure fairness, for the logarithm experiments we implement our forward-only gradient
estimator within the same \texttt{matfree}-based codebase used by the Lanczos-adjoint
baseline of \citet{kramer2024gradients}. Thus, our method and the adjoint baseline share
the same Lanczos forward pass, matrix-vector-product interface, objectives, and reference
routines; only the gradient computation differs. For the block-Arnoldi comparisons, we
use the public \texttt{fAb}-Frechet repository accompanying \citet{kressner2026}, reusing
its graph-loading code, network-sensitivity objectives, and reference computations.
Sparse matrix-vector products are evaluated with the NumPy/SciPy sparse routines used
by that implementation.

\subsection{Matrix-logarithm and their derivatives}
\label{subsec:experiments_logdet}

Log-determinants of symmetric positive definite matrices are a standard bottleneck in
Gaussian-process marginal likelihoods and related models involving covariance, kernel,
precision, Hessian, or Gauss--Newton matrices
\citep{rasmussen2006gaussian,gardner2018gpytorch,ubaru2017fast,dong2017scalable,immer2021scalable}.
For an SPD matrix $K(\theta)$,
\[
    \log\det K(\theta)=\operatorname{tr}(\log K(\theta)),
\]
and stochastic Lanczos quadrature estimates this trace by averaging quadratic forms
$u^\top \log(K(\theta))u$ over random probe vectors $u$. In this experiment, we isolate
a single fixed Rademacher probe. This removes stochastic trace-estimation variance and
tests the core question: whether the gradient of the Lanczos approximation to
$u^\top\log(K(\theta))u$ is accurate.

We construct $K(\theta)$ from a synthetic Gaussian-process regression problem with
inputs $x_i\in\mathbb{R}^2$ and an RBF kernel with diagonal noise,
\[
K_{ij}(\theta)
=
\sigma_f^2
\exp\!\left(
-\frac{1}{2}\left\|\frac{x_i-x_j}{\ell}\right\|_2^2
\right)
+
(\sigma_n^2+10^{-6})\delta_{ij}.
\]
The parameters are initialized as
$\ell=0.9$, $\sigma_f=1.1$, and $\sigma_n=0.15$. We use
$n=15{,}000$, and compare against a dense eigendecomposition reference for probe
value and gradient.

Figure~\ref{fig:trlogdet-adjoint} compares our gradient approximation with the
adjoint-based Lanczos implemented in \texttt{matfree}~\cite{kramer2024gradients} (referred to as Adjoint). We evaluate both methods
using the same forward Lanczos approximation, with and without full reorthogonalization.
With full reorthogonalization, our gradient approximation agrees closely with the
adjoint-based gradient once the forward Lanczos estimate is accurate. At \(m=60\), the
forward relative error is already below \(10^{-6}\), and the relative difference between
our gradient and the adjoint gradient is about \(3.4\times 10^{-5}\); at \(m=80\), these
drop to about \(10^{-11}\) and \(2.4\times 10^{-8}\), respectively. This supports the
central claim that, when the Lanczos estimate is accurate, our approximate gradients are accurate as well. We observe instability in the adjoint-based method after $m=100$. 
Without reorthogonalization, the
forward approximation converges more slowly and the adjoint gradient can become unstable,
whereas our approximation continues to track the forward accuracy and inherits the backward stability of the Lanczos quadrature.

This experiment focuses on a single probe
$u^\top\log(K(\theta))u$ with fixed $u$ sample. Additional probes could be used to improve the Hutchinson--Lanczos
log-determinant estimation. A detailed comparison for the full stochastic trace
estimator with GPyTorch's Lanczos-based log-determinant gradients, is provided in
Appendix~\ref{app:Gpy_logdet}.

\begin{figure}[t]
    \centering
    \includegraphics[width=0.6\linewidth]{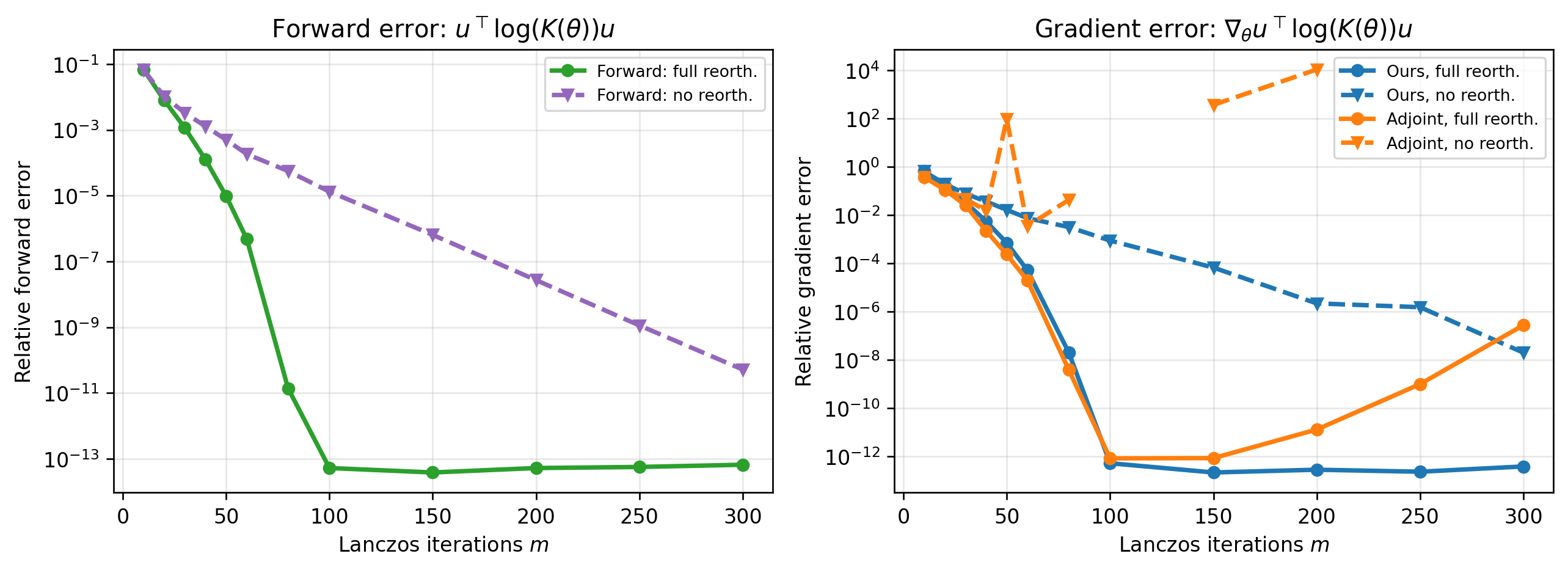}
    \caption{
    Comparison of our method with the adjoint-based method~\cite{kramer2024gradients} for $u^\top \log K(\theta) u$ matrix experiment with $n=15{,}000$.
    Left: relative error of the Lanczos approximation to
    $u^\top\log(K(\theta))u$. Right: relative gradient error with respect to
    kernel hyperparameters.
    }
    \label{fig:trlogdet-adjoint}
\end{figure}

\subsection{Network sensitivity benchmarks}

We next evaluate our Lanczos gradient approximation on graph-sensitivity tasks based on
matrix exponentials. In applications, these sensitivities can
be used to rank influential edges or interactions, identify which parts of a network most
affect information flow, and assess the robustness of network statistics to structural
changes \citep{benzi2020,delacruz2022,pozza2018,schweitzer2023}. Following the
network-analysis setup of \citet{kressner2026}, we focus on sensitivities derived from
the matrix exponential on sparse graph adjacency matrices.

We consider two objectives. The first is the \emph{total-network-communicability} (TN)
sensitivity
\[
S^{(\mathrm{TN})}_{ij}(A)
=
e_i^\top L_{\exp}(A^\top,\mathbf{1}\mathbf{1}^\top)e_j,
\]
which measures how a global communicability score changes under perturbations of the
graph. The second is the \emph{subgraph-centrality} (SC) sensitivity
\[
S^{(\mathrm{SC})}_{ij}(A,\ell)
=
e_i^\top L_{\exp}(A^\top,e_\ell e_\ell^\top)e_j,
\]
which measures the sensitivity of the local subgraph-centrality score associated with
node $\ell$. The TN objective probes a dense global direction, whereas SC probes a
localized node direction. Together, they provide complementary tests of Fr\'echet-derivative
approximation, ranging from global network sensitivity to local node-level sensitivity.

In this setting, the Lanczos approximate gradient can be obtained very efficiently. Using~\eqref{eq:method_grad_approx}, and replacing the perturbation to be rank-$1$, $dA=ab^\top$, we get
\begin{equation}
    \operatorname{tr}\!\left(G V^\top a b^\top V\right)
    =
    (V^\top b)^\top G (V^\top a).
    \label{eq:exp_rank_one_contraction}
\end{equation}
Thus, the cost of computing the gradient is
$O(nm+m^2)$ without needing any backward pass. Both graph objectives above have this structure: TN uses the dense
rank-one direction $\mathbf{1}\mathbf{1}^\top$, while SC uses the sparse rank-one direction
$e_\ell e_\ell^\top$.

We compare our gradient approximation against the modified-Arnoldi
method of \citet{kressner2026} (referred to as KO), using a sparse block-exponential computation as the
reference for error measurement. We use their 
We run these experiments on sparse undirected graphs from the SNAP collection: \texttt{ca-HepTh}, an
arXiv high-energy-physics theory coauthorship network, and \texttt{com-Amazon}, an
Amazon product co-purchasing network with ground-truth communities. Additional dataset
details and exact details of our experiments for reproducibility are given in Appendix~\ref{app:network-sensitivity}.

Table~\ref{tab:network-sensitivity-summary} summarizes the graph-sensitivity
benchmarks at \(m=16\) and \(m=32\). Our method is consistently faster than KO across
both graphs and objectives, with average speedups from \(1.42\times\) on
\texttt{SNAP/ca-HepTh} to \(2.64\times\) on \texttt{com-Amazon}. This is consistent
with the Krylov problems being solved: our method works with the original \(n\times n\)
adjacency matrix, whereas KO uses a \(2n\times 2n\) block triangular embedding for the
Fr\'echet derivative action. By \(m=32\), both methods reach near machine-precision
relative gradient error; at \(m=16\), our method is more accurate except on
\texttt{ca-HepTh}/SC. Full curves are given in
Appendix~\ref{app:network-sensitivity}.

\begin{table}[t]
\centering
\small
\caption{
Graph-sensitivity benchmark summary at Krylov depths $m=16$ and $m=32$. Obj. defines the objective function tested, KO is the modified Arnoldi method in~\cite{kressner2026}.
Left: Average ($3$ iterations) runtime speedups over KO. Right: relative gradient errors with respect to the full Fr\'echet derivative.
}
\label{tab:network-sensitivity-summary}

\begin{minipage}[t]{0.36\linewidth}
\centering
\caption*{\small Speedup}
\begin{tabular}{llcc}
\toprule
Graph & Obj. & 16 & 32 \\
\midrule
HepTh & TN & $1.42$ & $1.55$ \\
Amazon & TN & $2.18$ & $2.58$ \\
HepTh & SC & $1.46$ & $1.60$ \\
Amazon & SC & $2.12$ & $2.64$ \\
\bottomrule
\end{tabular}
\end{minipage}
\hfill
\begin{minipage}[t]{0.60\linewidth}
\centering
\caption*{\small Relative gradient error}
\begin{tabular}{llcccc}
\toprule
& & \multicolumn{2}{c}{16} & \multicolumn{2}{c}{32} \\
\cmidrule(lr){3-4}\cmidrule(lr){5-6}
Graph & Obj. & Ours & KO & Ours & KO \\
\midrule
HepTh & TN & $10^{-4}$ & $10^{-4}$ & $10^{-14}$ & $10^{-11}$ \\
Amazon & TN & $10^{-7}$ & $10^{-5}$ & $10^{-14}$ & $10^{-12}$ \\
HepTh & SC & $10^{-1}$ & $10^{-2}$ & $10^{-10}$ & $10^{-12}$ \\
Amazon & SC & $10^{-8}$ & $10^{-8}$ & $10^{-14}$ & $10^{-15}$ \\
\bottomrule
\end{tabular}
\end{minipage}
\end{table}

\subsection{Quantum Hamiltonian learning}
\label{subsec:experiments_quantum}

Hamiltonian learning is a basic problem in quantum system identification: given
observations of a system's dynamics, the goal is to recover the Hamiltonian that
generates them. This task arises in the characterization and calibration of quantum
devices, validation of quantum simulators, and learning effective models for many-body
quantum systems \citep{banchi2023learning,huang2023robust,haah2024practical}. 
We consider a transverse-field Ising-type Hamiltonian on an \(L\)-site spin system. The
unknown Hamiltonian is represented as
\[
    H_\theta = \sum_{j=1}^p \theta_j P_j ,
\]
where \(P_j\in\mathbb{C}^{2^L\times 2^L}\) are local tensor-product operators. Let
\(I\), \(X\), and \(Z\) denote the \(2\times 2\) identity, Pauli-\(X\), and Pauli-\(Z\)
matrices. For \(i=0,\ldots,L-1\), define
\[
    X_i
    =
    I^{\otimes i}\otimes X\otimes I^{\otimes(L-i-1)},
    \qquad
    Z_i
    =
    I^{\otimes i}\otimes Z\otimes I^{\otimes(L-i-1)} .
\]
For \(i=0,\ldots,L-2\), define the nearest-neighbor interaction
\[
    Z_iZ_{i+1}
    =
    I^{\otimes i}\otimes Z\otimes Z\otimes I^{\otimes(L-i-2)} .
\]
The operator dictionary is
\[
    \{P_j\}_{j=1}^p
    =
    \{X_i\}_{i=0}^{L-1}
    \cup
    \{Z_i\}_{i=0}^{L-1}
    \cup
    \{Z_iZ_{i+1}\}_{i=0}^{L-2},
    \qquad
    p=3L-1 .
\]
In the reported experiment, \(L=8\), so the
Hilbert-space dimension is \(2^8=256\) and the number of unknown parameters is
\(p=23\).

We generate synthetic training data from a ground-truth Hamiltonian
\(H_{\theta_\star}\). The single-site \(X_i\) and \(Z_i\) coefficients are centered near
\(0.35\), while the nearest-neighbor \(Z_iZ_{i+1}\) coefficients are centered near
\(1.0\). Independent Gaussian perturbations with standard deviation \(0.05\) are added
to the coefficients. For each training example, we sample a random normalized initial
state \(x_j\in\mathbb{C}^{2^L}\), sample an evolution time
\[
    t_j \sim \mathrm{Unif}(2,6),
\]
and form the target state
\[
    c_j = \exp(-\mathrm{i}t_jH_{\theta_\star})x_j .
\]
The dataset contains \(30\) such input-output pairs. Given these samples, we recover the Hamiltonian parameters by minimizing
\[
    \mathcal{L}(\theta)
    =
    \sum_{j=1}^{30}
    \left\|
        c_j - \exp(-\mathrm{i}t_jH_\theta)x_j
    \right\|_2^2 .
\]
The optimization is initialized from a perturbation of the true parameter vector,
\[
    \theta_0 = \theta_\star + 0.1\,\xi,
    \qquad
    \xi \sim \mathcal{N}(0,I),
\]
and run for \(400\) steps with learning rate \(0.01\).

\begin{table}[t]
\centering
\small
\caption{
Hamiltonian-learning summary for $L=8$ with $400$ optimization steps.
Speedup is relative to the dense Fr\'echet-derivative baseline. The initial
gradient error is measured against the gradient with full eigendecomposition at $\theta_0$.
}
\label{tab:hamiltonian-learning}
\begin{tabular}{cccc}
\toprule
Method & Grad. rel. err. & Final param. error & Speedup \\
\midrule
Dense & -- & $2.00{\times}10^{-10}$ & $1.0{\times}$ \\
$m=30$ & $7.49{\times}10^{-2}$ & $5.43{\times}10^{-3}$ & $14.8{\times}$ \\
$m=40$ & $6.31{\times}10^{-4}$ & $1.61{\times}10^{-4}$ & $12.4{\times}$ \\
 $m=50$ & $5.62{\times}10^{-7}$ & $1.05{\times}10^{-7}$ & $10.4{\times}$ \\
\bottomrule
\end{tabular}
\end{table}

Table~\ref{tab:hamiltonian-learning} summarizes the Hamiltonian-learning experiment.
Increasing the Krylov depth sharply improves the gradient approximation at the
initial point: the relative gradient error drops from $7.49{\times}10^{-2}$ at
$m=30$ to $5.62{\times}10^{-7}$ at $m=50$. This translates directly into the
optimization behavior. With $m=30$, the method makes progress but plateaus away from
the dense solution; with $m=40$, it recovers the parameters to about $10^{-4}$; and
with $m=50$, it reaches $1.05{\times}10^{-7}$ parameter error while remaining
$10.4\times$ faster than the dense Fr\'echet baseline. Full loss and parameter-error
trajectories as functions of both iteration and wall time are shown in
Appendix~\ref{app:hamiltonian-learning}.

\section{Limitations and future work}
This work focuses on Hermitian matrices, where Lanczos yields a short three-term recurrence and a tridiagonal projected problem. Extending the approximation to general non-Hermitian matrices, for example via Arnoldi or structure-preserving Lanczos-type orthogonalization, is a natural direction for future work. Our analysis is also limited to scalar objectives \(u^\top f(A(\theta))v\). For derivatives of matrix-function-vector products \(f(A(\theta))v\), derivative-Krylov or adjoint-based methods may still be preferable. Our current formulation requires the algorithm to store the Lanczos vectors which can lead to a memory bottleneck for very large matrices and number of Lanczos iterations. Future work may reformulate the algorithm to reduce storage requirements.

Finally, our current implementation uses standard Python numerical and automatic-differentiation libraries, which introduce computational overhead. A compiled or accelerator-native implementation with careful batching and memory management is an important next step.

\section*{Acknowledgement}
This material is based upon work supported by the U.S. Department of Energy, Office of Science, Office of
Advanced Scientific Computing Research and Office of Basic Energy Sciences, Scientific Discovery through
Advanced Computing (SciDAC) program.

\bibliographystyle{plainnat}
\bibliography{sections/refs}

\appendix

\section{Proofs for Section~\ref{sec:grad_approx}}
\label{app:section4_details}

This appendix provides more details on the projected sensitivity,  and the residual-controlled error formula used in
Section~\ref{sec:grad_approx}. We use the same notation as in the main text.
The matrix \(A=A(\theta)\in\mathbb{R}^{n\times n}\) is symmetric and differentiable
with respect to the scalar parameter \(\theta\). The vector \(u\in\mathbb{R}^n\) is fixed
and nonzero. The Lanczos iteration is initialized at
\[
    v_1 = \frac{u}{\|u\|},
\]
and after \(m\) steps returns an orthonormal basis
\[
    V=[v_1,\ldots,v_m]\in\mathbb{R}^{n\times m}
\]
and a symmetric tridiagonal projected matrix
\[
    T = V^\top A V .
\]
We also use the Lanczos relation
\begin{equation}
    AV = VT+\beta_m v_{m+1}e_m^\top ,
    \label{eq:app_lanczos_relation}
\end{equation}
where \(e_m\) denotes the \(m\)-th canonical basis vector in \(\mathbb{R}^m\), i.e.,
the \(m\)-th column of the \(m\times m\) identity matrix. If \(\beta_m=0\), the Krylov subspace is
\(A\)-invariant and the residual term is zero.

The Lanczos estimate considered in the main text is
\begin{equation}
    \widehat{\phi}(\theta;u)
    =
    \|u\|^2 e_1^\top f(T)e_1 .
    \label{eq:app_lanczos_quad}
\end{equation}
All differentials below are with respect to \(\theta\), unless we explicitly view
\(\widehat{\phi}\) as a function of \(T\).

\subsection{Sensitivity with respect to the projected matrix}
\label{app:sensitivity}

Let
\[
    T = Q\Lambda Q^\top,
    \qquad
    \Lambda=\operatorname{diag}(\lambda_1,\ldots,\lambda_m),
\]
and define
\[
    c = Q^\top e_1 .
\]
Applying the symmetric Fréchet derivative formula
\eqref{eq:bg_hadamard_frechet} to the tridiagonal matrix \(T\), with perturbation \(dT\),
gives
\begin{equation}
    L_f(T,dT)
    =
    Q\bigl(F(\Lambda)\circ(Q^\top dTQ)\bigr)Q^\top ,
    \label{eq:app_frechet_formula}
\end{equation}
where \(F(\Lambda)\) is the divided-difference matrix defined in
\eqref{eq:bg_divided_difference}.

We now rewrite this scalar differential as a trace pairing with \(dT\). Since
\(c^\top M c=\operatorname{tr}(cc^\top M)\),
\begin{align}
    c^\top
    \bigl(F\circ(Q^\top dTQ)\bigr)c
    &=
    \operatorname{tr}\!\left(
        cc^\top \bigl(F\circ(Q^\top dTQ)\bigr)
    \right) \\
    &=
    \operatorname{tr}\!\left(
        \bigl((cc^\top)\circ F\bigr)^\top Q^\top dTQ
    \right).
\end{align}
The second equality uses the identity
\[
    \operatorname{tr}\!\left(A^\top(B\circ C)\right)
    =
    \operatorname{tr}\!\left((A\circ B)^\top C\right).
\]
Using cyclic invariance of the trace gives
\begin{align}
    d\widehat{\phi}
    &=
    \operatorname{tr}\!\left(
        \left[
        \|u\|^2 Q\bigl((cc^\top)\circ F\bigr)Q^\top
        \right]^\top dT
    \right).
\end{align}
Therefore
\begin{equation}
    d\widehat{\phi}
    =
    \operatorname{tr}(G^\top dT),
    \label{eq:app_dphi_G}
\end{equation}
with
\begin{equation}
    G
    =
    \|u\|^2 Q\bigl((cc^\top)\circ F\bigr)Q^\top .
    \label{eq:app_G}
\end{equation}
Since \(F\) is symmetric and \(cc^\top\) is symmetric, \(G\) is symmetric. This proves
Theorem~\ref{thm:projected_sensitivity}.

\subsection{Error from ignoring basis variation}
\label{app:error}

We now prove Theorem~\ref{thm:method_error}. The goal is to identify the exact
difference between the full differential and the forward-only approximation.

Let \(V^\perp\in\mathbb{R}^{n\times(n-m)}\) be an orthonormal basis for the orthogonal
complement of \(\operatorname{span}(V)\), chosen so that its first column is
\(v_{m+1}\). Since \(V^\top V=I\), differentiating the orthonormality constraint gives
\[
    (dV)^\top V+V^\top dV=0.
\]
Hence
\[
    S:=V^\top dV
\]
is skew-symmetric. Every \(dV\) can therefore be decomposed as
\begin{equation}
    dV = VS+V^\perp N,
    \label{eq:app_dV_decomp}
\end{equation}
where
\[
    S^\top=-S,
    \qquad
    N\in\mathbb{R}^{(n-m)\times m}.
\]
Define
\begin{equation}
    \eta = N^\top e_1^{(n-m)}\in\mathbb{R}^m,
    \label{eq:app_eta}
\end{equation}
where \(e_1^{(n-m)}\) is the coordinate vector corresponding to the \(v_{m+1}\)
direction in \(V^\perp\).

\begin{lemma}[Differential of the tridiagonal matrix]
\label{lem:app_dT}
Under the decomposition \eqref{eq:app_dV_decomp},
\begin{equation}
    dT
    =
    V^\top dA\,V
    +
    [T,S]
    +
    \beta_m(\eta e_m^\top+e_m\eta^\top),
    \label{eq:app_dT_decomp}
\end{equation}
where
\[
    [T,S]=TS-ST .
\]
\end{lemma}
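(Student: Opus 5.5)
Starting from $T=V^\top AV$, we differentiate to get the three-term expression \eqref{eq:method_dT_full},
\[
dT = V^\top dA\,V + dV^\top AV + V^\top A\,dV .
\]
The direct term $V^\top dA\,V$ is already the first term of \eqref{eq:app_dT_decomp}. It therefore suffices to show that the two basis-variation terms add up to $[T,S]+\beta_m(\eta e_m^\top+e_m\eta^\top)$. We will handle $V^\top A\,dV$ first and recover the other term as its transpose.

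For the term $V^\top A\,dV$ we substitute the decomposition \eqref{eq:app_dV_decomp}, which gives $V^\top A\,dV = V^\top A V S + V^\top A V^\perp N$. The first piece is simply $TS$.

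For the second piece we use the symmetry of $A$ together with the Lanczos relation \eqref{eq:app_lanczos_relation}. Transposing that relation gives
\[
V^\top A = (AV)^\top = TV^\top + \beta_m e_m v_{m+1}^\top ,
\]
where we used $T^\top=T$. Multiplying on the right by $V^\perp$ gives
\[
V^\top AV^\perp = T V^\top V^\perp + \beta_m e_m v_{m+1}^\top V^\perp .
\]
Here $V^\top V^\perp=0$ because $V^\perp$ spans the orthogonal complement of $\operatorname{span}(V)$. Also $v_{m+1}^\top V^\perp=(e_1^{(n-m)})^\top$, because $v_{m+1}$ is the first column of the orthonormal matrix $V^\perp$. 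Hence $V^\top AV^\perp N=\beta_m e_m (e_1^{(n-m)})^\top N=\beta_m e_m\eta^\top$, using the definition \eqref{eq:app_eta}. Combining the two pieces,
\[
V^\top A\,dV = TS + \beta_m e_m\eta^\top .
\]

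The remaining basis-variation term satisfies $dV^\top AV=(V^\top A\,dV)^\top$, again by symmetry of $A$. Transposing the previous display and using $S^\top=-S$ and $T^\top=T$ gives
\[
dV^\top AV = -ST + \beta_m\eta e_m^\top .
\]
Adding the two basis-variation terms gives $TS-ST+\beta_m(\eta e_m^\top+e_m\eta^\top)$, which is exactly the claimed form.

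There is no real analytic obstacle. The one point needing care is the residual coupling $V^\top AV^\perp$: the argument relies on $v_{m+1}$ being orthogonal to $\operatorname{span}(V)$, so that it can be taken as the first column of $V^\perp$, and on $v_{m+1}$ having unit norm. This is a structural assumption of the setup, namely exact-arithmetic orthogonality of the Lanczos vectors, and it is already built into the statement of Theorem~\ref{thm:method_error}. We also implicitly assume that $V$ is differentiable in $\theta$, which holds as long as the Lanczos process does not break down (so $\beta_j\neq 0$ for $j<m$).
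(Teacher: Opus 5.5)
Your proof is correct and follows the paper's argument. You substitute $dV=VS+V^\perp N$ and use the transposed Lanczos relation, together with $V^\top V^\perp=0$ and $(V^\perp)^\top v_{m+1}=e_1^{(n-m)}$, to get $V^\top A\,dV=TS+\beta_m e_m\eta^\top$; the only cosmetic difference is that you obtain $dV^\top AV=-ST+\beta_m\eta e_m^\top$ by transposing this using $A=A^\top$, whereas the paper expands that term directly.
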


\begin{proof}
Using the Lanczos relation \eqref{eq:app_lanczos_relation},
\[
    AV = VT+\beta_m v_{m+1}e_m^\top .
\]
Substituting \(dV=VS+V^\perp N\) into the first basis-variation term gives
\begin{align}
    dV^\top AV
    &=
    (VS+V^\perp N)^\top
    (VT+\beta_m v_{m+1}e_m^\top) \\
    &=
    S^\top T
    +
    \beta_m N^\top (V^\perp)^\top v_{m+1}e_m^\top \\
    &=
    S^\top T+\beta_m\eta e_m^\top .
\end{align}
Since \(S^\top=-S\),
\[
    dV^\top AV
    =
    -ST+\beta_m\eta e_m^\top .
\]
Similarly, transposing the Lanczos relation gives
\[
    V^\top A
    =
    TV^\top+\beta_m e_m v_{m+1}^\top .
\]
Hence
\begin{align}
    V^\top A\,dV
    &=
    \bigl(TV^\top+\beta_m e_m v_{m+1}^\top\bigr)(VS+V^\perp N) \\
    &=
    TS+\beta_m e_m\eta^\top .
\end{align}
Combining these two identities with
\[
    dT=dV^\top AV+V^\top dA\,V+V^\top A\,dV
\]
gives
\[
    dT
    =
    V^\top dA\,V
    +
    (TS-ST)
    +
    \beta_m(\eta e_m^\top+e_m\eta^\top),
\]
as claimed.
\end{proof}

Substituting Lemma~\ref{lem:app_dT} into the projected sensitivity formula
\eqref{eq:app_dphi_G} gives
\begin{align}
    d\widehat{\phi}
    &=
    \operatorname{tr}(G^\top V^\top dA\,V)
    +
    \operatorname{tr}(G^\top[T,S]) \\
    &\qquad
    +
    \beta_m
    \operatorname{tr}\!\left(
        G^\top(\eta e_m^\top+e_m\eta^\top)
    \right).
    \label{eq:app_error_decomp}
\end{align}
The first term is the forward-only direct term. It remains to show that the commutator
term vanishes under the fixed-starting-vector constraint.

Since \(u\) is fixed, the initial Lanczos vector \(v_1=u/\|u\|\) is fixed. Therefore
\[
    dv_1=0.
\]
The first column of \(dV=VS+V^\perp N\) gives
\[
    0=dv_1=VSe_1+V^\perp Ne_1.
\]
The two summands lie in orthogonal subspaces, so both must vanish:
\begin{equation}
    Se_1=0,
    \qquad
    Ne_1=0.
    \label{eq:app_fixed_start}
\end{equation}

\begin{lemma}[Vanishing commutator]
\label{lem:app_commutator}
If \(S^\top=-S\) and \(Se_1=0\), then
\[
    \operatorname{tr}(G^\top[T,S])=0 .
\]
\end{lemma}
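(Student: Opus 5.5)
The plan is to show that the commutator $[T,G]$ has a very special low-rank structure, namely that it only involves $e_1$. The assumption $Se_1=0$ then kills the trace.

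\emph{Step 1: move the commutator onto $G$.} Since $G$ is symmetric by Theorem~\ref{thm:projected_sensitivity}, we have $G^\top=G$. Cyclicity of the trace gives
\[
\operatorname{tr}(G[T,S])=\operatorname{tr}(GTS)-\operatorname{tr}(GST)=\operatorname{tr}\bigl((GT-TG)S\bigr)=-\operatorname{tr}\bigl([T,G]S\bigr).
\]
It therefore suffices to compute $[T,G]$.

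\emph{Step 2: compute $[T,G]$ in the eigenbasis of $T$.} This is the main step. Write $H=(cc^\top)\circ F$, so that $G=\|u\|^2QHQ^\top$ and
\[
[T,G]=\|u\|^2Q(\Lambda H-H\Lambda)Q^\top .
\]
The $(i,j)$ entry of $\Lambda H-H\Lambda$ is $(\lambda_i-\lambda_j)c_ic_jF_{ij}$.
\begin{itemize}
\item For $i=j$ this entry is $0$.
\item For $i\neq j$ with $\lambda_i\neq\lambda_j$, the divided-difference definition \eqref{eq:bg_divided_difference} gives $c_ic_j\bigl(f(\lambda_i)-f(\lambda_j)\bigr)$.
\item For $i\neq j$ with a repeated eigenvalue $\lambda_i=\lambda_j$, the entry is $0$, and so is $c_ic_j(f(\lambda_i)-f(\lambda_j))$.
\end{itemize}
In every case the entry equals $c_ic_j\bigl(f(\lambda_i)-f(\lambda_j)\bigr)$, so $\Lambda H-H\Lambda=f(\Lambda)cc^\top-cc^\top f(\Lambda)$. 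Conjugating back with $Q$ and using $Qc=e_1$, we get
\[
[T,G]=\|u\|^2\bigl(f(T)e_1e_1^\top-e_1e_1^\top f(T)\bigr).
\]
The repeated-eigenvalue case is the one place where care is needed, and the bullet above shows it causes no trouble.

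\emph{Step 3: conclude.} Substituting into Step 1 gives
\[
\operatorname{tr}(G[T,S])=-\|u\|^2\bigl(e_1^\top S f(T)e_1-e_1^\top f(T)Se_1\bigr).
\]
The second term vanishes because $Se_1=0$. For the first term, skew-symmetry gives $e_1^\top S=-(Se_1)^\top=0$, so it vanishes as well. Hence $\operatorname{tr}(G^\top[T,S])=0$, which is the claim of the lemma.
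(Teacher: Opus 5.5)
Your proof is correct and uses the same key cancellation as the paper's: in the eigenbasis of $T$, $(\lambda_i-\lambda_j)F_{ij}=f(\lambda_i)-f(\lambda_j)$, after which $Se_1=0$ and skew-symmetry finish the argument. The only difference is packaging: you first derive $[T,G]=\|u\|^2\bigl(f(T)e_1e_1^\top-e_1e_1^\top f(T)\bigr)$ and then pair it with $S$, whereas the paper conjugates $S$ into the eigenbasis and reduces the trace to $2(f(\Lambda)c)^\top\widetilde S c$; your explicit handling of repeated eigenvalues is also slightly more careful than the paper's.
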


\begin{proof}
Let
\[
    \widetilde S = Q^\top S Q .
\]
Since \(S^\top=-S\), we also have \(\widetilde S^\top=-\widetilde S\). Moreover,
using \(c=Q^\top e_1\) and \(Se_1=0\),
\[
    \widetilde S c
    =
    Q^\top S Q Q^\top e_1
    =
    Q^\top S e_1
    =
    0 .
\]
Because \(T=Q\Lambda Q^\top\),
\[
    Q^\top[T,S]Q
    =
    Q^\top(TS-ST)Q
    =
    \Lambda \widetilde S-\widetilde S\Lambda .
\]
Using the definition of \(G\) from \eqref{eq:app_G} and cyclic invariance of the trace,
\begin{align}
    \operatorname{tr}(G^\top[T,S])
    &=
    \|u\|^2
    \operatorname{tr}\!\left(
        \bigl((cc^\top)\circ F(\Lambda)\bigr)
        (\Lambda\widetilde S-\widetilde S\Lambda)
    \right).
\end{align}
It remains to show that the trace on the right-hand side is zero. Define
\[
    \mathcal{C}
    =
    \operatorname{tr}\!\left(
        \bigl((cc^\top)\circ F(\Lambda)\bigr)
        (\Lambda\widetilde S-\widetilde S\Lambda)
    \right).
\]
Expanding entrywise,
\begin{align}
    \mathcal{C}
    &=
    \sum_{i,j}
    c_i c_j F(\lambda_i,\lambda_j)
    (\lambda_j-\lambda_i)\widetilde S_{ji}.
    \label{eq:app_commutator_expansion}
\end{align}
The diagonal terms vanish because \(\lambda_j-\lambda_i=0\) when \(i=j\). For
\(i\neq j\), the divided-difference identity gives
\[
    F(\lambda_i,\lambda_j)(\lambda_j-\lambda_i)
    =
    f(\lambda_j)-f(\lambda_i).
\]
Hence
\begin{align}
    \mathcal{C}
    &=
    \sum_{i,j}
    c_i c_j
    \bigl(f(\lambda_j)-f(\lambda_i)\bigr)
    \widetilde S_{ji} \\
    &=
    \sum_{i,j}
    c_i c_j f(\lambda_j)\widetilde S_{ji}
    -
    \sum_{i,j}
    c_i c_j f(\lambda_i)\widetilde S_{ji}.
\end{align}
The two sums can be written in matrix form as
\[
    \mathcal{C}
    =
    (f(\Lambda)c)^\top \widetilde S c
    -
    c^\top \widetilde S f(\Lambda)c .
\]
Since \(\widetilde S\) is skew-symmetric,
\[
    c^\top \widetilde S f(\Lambda)c
    =
    - (f(\Lambda)c)^\top \widetilde S c .
\]
Therefore
\[
    \mathcal{C}
    =
    2(f(\Lambda)c)^\top \widetilde S c .
\]
Finally, \(\widetilde S c=0\), so \(\mathcal{C}=0\). Thus
\[
    \operatorname{tr}(G^\top[T,S])=0 .
\]
\end{proof}
We now complete the proof of Theorem~\ref{thm:method_error}. By
Lemma~\ref{lem:app_commutator}, the commutator term in
\eqref{eq:app_error_decomp} is zero. Hence
\[
    d\widehat{\phi}
    =
    \operatorname{tr}(G^\top V^\top dA\,V)
    +
    \beta_m
    \operatorname{tr}\!\left(
        G^\top(\eta e_m^\top+e_m\eta^\top)
    \right).
\]
The direct term can be rewritten as
\[
    \operatorname{tr}(G^\top V^\top dA\,V)
    =
    \operatorname{tr}\!\left((VGV^\top)^\top dA\right).
\]
Since \(G\) is symmetric, this is also
\[
    \operatorname{tr}\!\left(VGV^\top dA\right)
\]
for symmetric \(dA\), as in \eqref{eq:method_error_formula}. Finally,
\begin{align}
    \operatorname{tr}\!\left(
        G^\top(\eta e_m^\top+e_m\eta^\top)
    \right)
    &=
    \operatorname{tr}(G\eta e_m^\top)
    +
    \operatorname{tr}(G e_m\eta^\top) \\
    &=
    e_m^\top G\eta+\eta^\top G e_m \\
    &=
    2e_m^\top G\eta .
\end{align}
Therefore
\begin{equation}
    d\widehat{\phi}
    =
    \operatorname{tr}\!\left(VGV^\top dA\right)
    +
    2\beta_m e_m^\top G\eta ,
    \label{eq:app_method_error_formula}
\end{equation}
which is precisely \eqref{eq:method_error_formula}. 

\section{Additional experimental results}
\label{app:Extra_exp_results}
\subsection{Log-determinant optimization in GP training}
\label{app:Gpy_logdet}
We evaluate the proposed gradient approximation in an end-to-end exact
Gaussian-process training problem. Given training data
\(\{(x_i,y_i)\}_{i=1}^{n}\), we optimize the negative log marginal likelihood
\[
    \mathcal{L}(\theta)
    =
    \frac{1}{2n}(y-\mu_\theta)^\top K_\theta^{-1}(y-\mu_\theta)
    +
    \frac{1}{2n}\log\det K_\theta
    +
    \frac{1}{2}\log(2\pi),
\]
where
\[
    K_\theta
    =
    \sigma_f^2 K_{\nu=3/2}(X,X;\ell)
    +
    \sigma_n^2 I.
\]
Here \(K_{\nu=3/2}\) is a Mat\'ern-\(3/2\) kernel with ARD lengthscales,
\(\sigma_f^2\) is the output scale, and \(\sigma_n^2\) is the observation noise.

The inverse-quadratic term is computed using GPyTorch's conjugate-gradient
solver in both methods. The difference is the treatment of the log-determinant
gradient. GPyTorch uses its standard stochastic Lanczos quadrature estimator and
its built-in backward approximation. Our method instead replaces the
log-determinant gradient with the proposed projected Lanczos-gradient estimator.

We use a rank-\(r\) pivoted-Cholesky preconditioner
\[
    P_\theta = R_\theta R_\theta^\top + \delta_\theta I
\]
and apply the identity
\[
    \log\det K_\theta
    =
    \log\det P_\theta
    +
    \log\det\!\left(P_\theta^{-1/2}K_\theta P_\theta^{-1/2}\right).
\]
For each probe vector \(z_s\), we run \(m\) steps of unreorthogonalized Lanczos
on
\[
    B_\theta = P_\theta^{-1/2}K_\theta P_\theta^{-1/2},
\]
starting from \(z_s/\|z_s\|\), producing a tridiagonal matrix \(T_s\). The
log-determinant is estimated as
\[
    \widehat{\log\det K_\theta}
    =
    \log\det P_\theta
    +
    \frac{n}{p}\sum_{s=1}^{p}
    e_1^\top \log(T_s)e_1 .
\]
For the gradient, we use our gradient approximation, and evaluate the resulting matrix-vector contractions using automatic
differentiation through \(K_\theta\) and \(P_\theta\). Thus the derivative of the
preconditioner with respect to the GP hyperparameters is included via autodifferentiation. The preconditioner is included here as it is crucial for improving the conditioning of the matrix thus improving convergence of CG and Lanczos.

We use the Protein dataset from the UCI benchmark collection~\citep{rana2013protein}.
We select a seeded
subset of \(4096\) examples, with an \(80/20\) train/test split, giving
\(3276\) training points and \(820\) test points. Both methods use the same
initial hyperparameters, Adam optimizer with learning rate \(0.025\), double
precision, \(p=20\) probes, \(m=40\) Lanczos steps, and a rank-15
pivoted-Cholesky preconditioner. Test RMSE is evaluated every three iterations.

Figure~\ref{fig:gp-protein-training} shows the training trajectory. GPyTorch
decreases the stochastic training objective more rapidly, but its test RMSE
bottoms out early and then increases which shows overfitting of parameters. Our Lanczos-gradient
method decreases the training objective more slowly, but gives a steadily
improving test RMSE and reaches the best final predictive accuracy in this run. 

To isolate the log-determinant approximation at initialization, we compare the
first-iteration stochastic log-determinant estimate and its gradient against a
dense Cholesky reference. For this diagnostic, both methods use the same initial
hyperparameters, the same rank-15 pivoted-Cholesky preconditioner, and the same
set of Gaussian probe vectors. Table~\ref{tab:gp-first-iter-logdet} shows that
the two methods obtain comparable log-determinant estimates, while our
preconditioned non-reorthogonalized Lanczos estimator gives a smaller gradient
error in this run. Computing dense references throughout training, while also ensuring that all
stochastic estimators use identical probe vectors at every iteration, is
expensive and application-specific. We therefore leave a systematic study of how
log-determinant gradient accuracy affects downstream GP training to future work.

While a more systematic study of optimization stability, preconditioning choices,
and hyperparameter trajectories is left for future work, this experiment
demonstrates that the proposed gradient estimator can be competitive in
end-to-end practical Gaussian-process training.

\begin{table}[t]
\centering
\small
\caption{First-iteration log-determinant diagnostic for exact-GP training on
Protein. Both stochastic methods use the same initial hyperparameters, same
Gaussian probes, \(m=40\), \(p=20\), and a rank-15 pivoted-Cholesky
preconditioner. Errors are relative to a dense Cholesky reference.}
\label{tab:gp-first-iter-logdet}
\begin{tabular}{lcc}
\toprule
Method & Rel. logdet error & Rel. gradient error \\
\midrule
GPyTorch & $2.29\times 10^{-3}$ & $1.44\times 10^{-1}$ \\
Ours & $1.94\times 10^{-3}$ & $5.10\times 10^{-3}$ \\
\bottomrule
\end{tabular}
\end{table}

\begin{figure}[t]
    \centering
    \includegraphics[width=\linewidth]{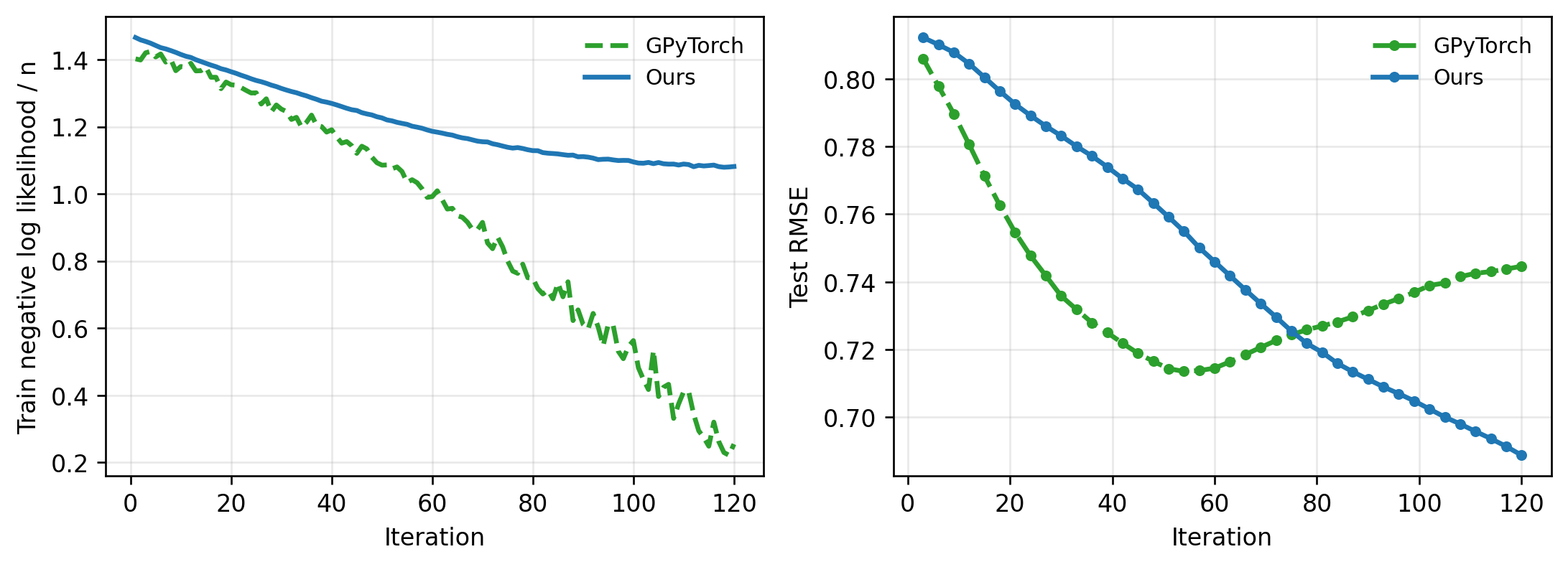}
    \caption{GP training on the Protein dataset with a Mat\'ern-\(3/2\)
    kernel. Both methods use \(n=4096\), \(m=40\), \(p=20\), and a rank-15
    pivoted-Cholesky preconditioner. GPyTorch decreases the stochastic marginal
    likelihood faster, while our unreorthogonalized Lanczos-gradient method
    reaches lower test RMSE.}
    \label{fig:gp-protein-training}
\end{figure}

\subsection{Graph-sensitivity results}
\label{app:network-sensitivity}

We evaluate the proposed forward-only gradient approximation on graph-sensitivity
objectives involving the matrix exponential. Let \(A\) be the adjacency matrix of an
undirected graph. We consider two standard matrix-function centrality objectives.

The first is total network communicability,
\[
    \mathrm{TN}(A)
    =
    \mathbf{1}^\top \exp(A)\mathbf{1}.
\]
The sensitivity of \(\mathrm{TN}\) with respect to an entrywise perturbation of \(A\) is
\[
    S^{\mathrm{TN}}_{ij}(A)
    =
    \mathbf{1}^\top L_{\exp}(A,e_i e_j^\top)\mathbf{1}.
\]
Using the adjoint identity for Fr\'echet derivatives, this can equivalently be written as
\[
    S^{\mathrm{TN}}_{ij}(A)
    =
    e_i^\top L_{\exp}(A^\top,\mathbf{1}\mathbf{1}^\top)e_j .
\]

The second is subgraph centrality at node \(\ell\),
\[
    \mathrm{SC}_{\ell}(A)
    =
    e_\ell^\top \exp(A)e_\ell .
\]
Its entrywise sensitivity is
\[
    S^{\mathrm{SC}}_{ij}(A;\ell)
    =
    e_\ell^\top L_{\exp}(A,e_i e_j^\top)e_\ell,
\]
or equivalently
\[
    S^{\mathrm{SC}}_{ij}(A;\ell)
    =
    e_i^\top L_{\exp}(A^\top,e_\ell e_\ell^\top)e_j .
\]
Thus both TN and SC sensitivities reduce to Fr\'echet derivative actions with a
structured direction matrix: \(\mathbf{1}\mathbf{1}^\top\) for TN and
\(e_\ell e_\ell^\top\) for SC.

We compare our Lanczos-based forward-only gradient estimator against the modified
Arnoldi method of \citet{kressner2026}. For \texttt{ca-HepTh}, we use
\((i,j)=(7200,6969)\), following the network-sensitivity setup of
\citet{kressner2026}. For \texttt{com-Amazon}, we use \((i,j)=(0,53525)\).
For SC, we set \(\ell=7200\) on \texttt{ca-HepTh} and \(\ell=0\) on
\texttt{com-Amazon}. In all experiments, we use the raw undirected adjacency
matrix and \(f(A)=\exp(A)\).

Figure~\ref{fig:network-sensitivity-speedup} reports speedups at Krylov depths
\(m=16\) and \(m=32\), computed as the modified-Arnoldi runtime divided by our
runtime and averaged over three runs. Our method is consistently faster across
both graphs and both objectives. 

Figures~\ref{fig:network-sensitivity-grad-error} and
\ref{fig:network-sensitivity-value-error} report relative gradient and value
errors over the full Krylov-depth sweep. By \(m=32\), both methods reach very
small relative gradient error on all benchmarks. Our method gives lower gradient
error on the TN objectives and on \texttt{com-Amazon}/SC, while modified Arnoldi
is more accurate on \texttt{ca-HepTh}/SC at intermediate depths. For the function
value itself, our Lanczos approximation is consistently more accurate across the
tested depths. While the accuracy difference between the two is not much, our method is much more efficient in terms of computation as it does not use reorthogonalization and works with a smaller matrix.

\begin{figure}[t]
    \centering
    \includegraphics[width=0.62\linewidth]{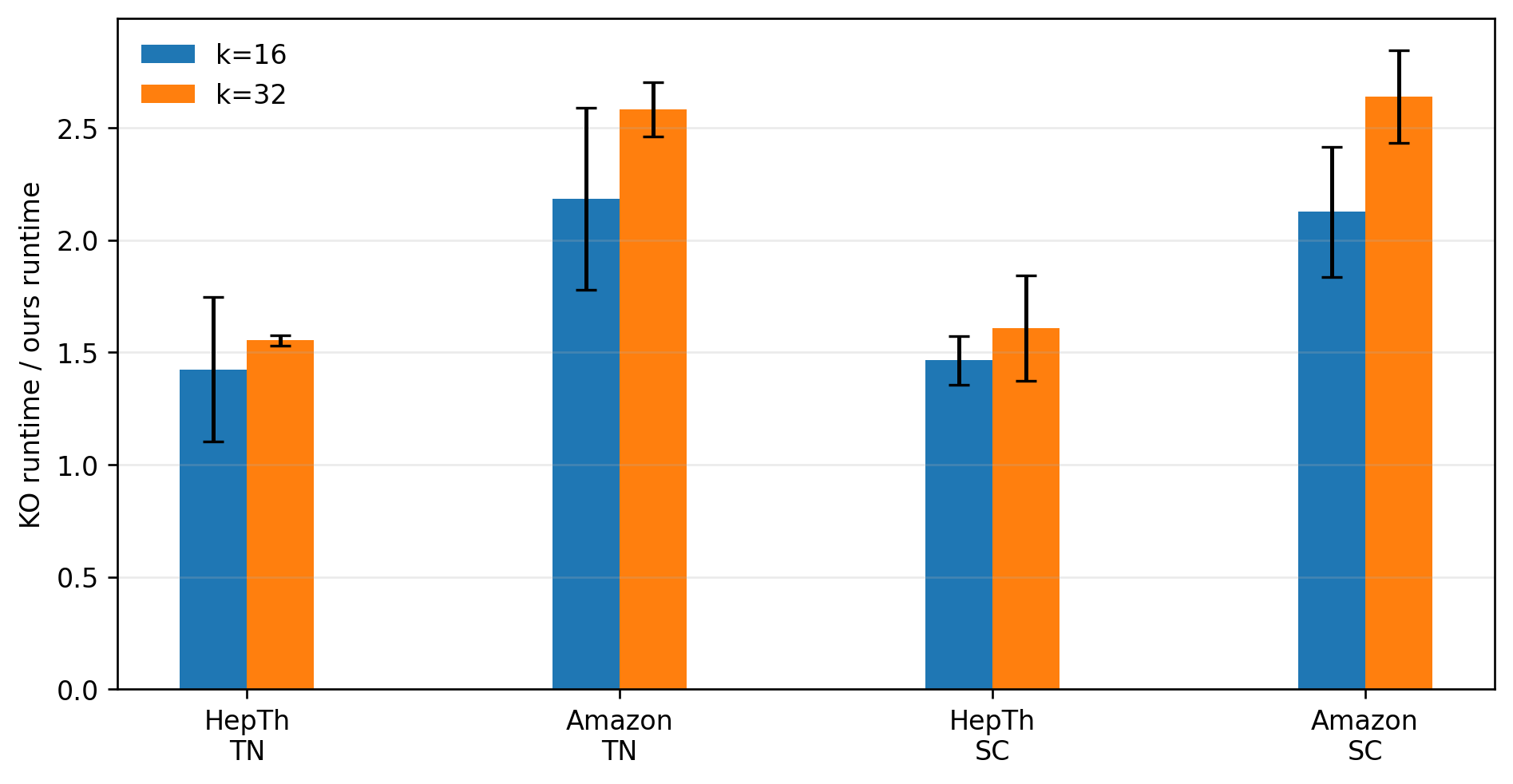}\
    \caption{Speedup of the proposed method over modified Arnoldi on graph-sensitivity benchmarks. Bars show mean speedup over three runs; error bars show 95\% confidence intervals.}
    \label{fig:network-sensitivity-speedup}
\end{figure}

\begin{figure}[t]
    \centering
    \includegraphics[width=0.7\linewidth]{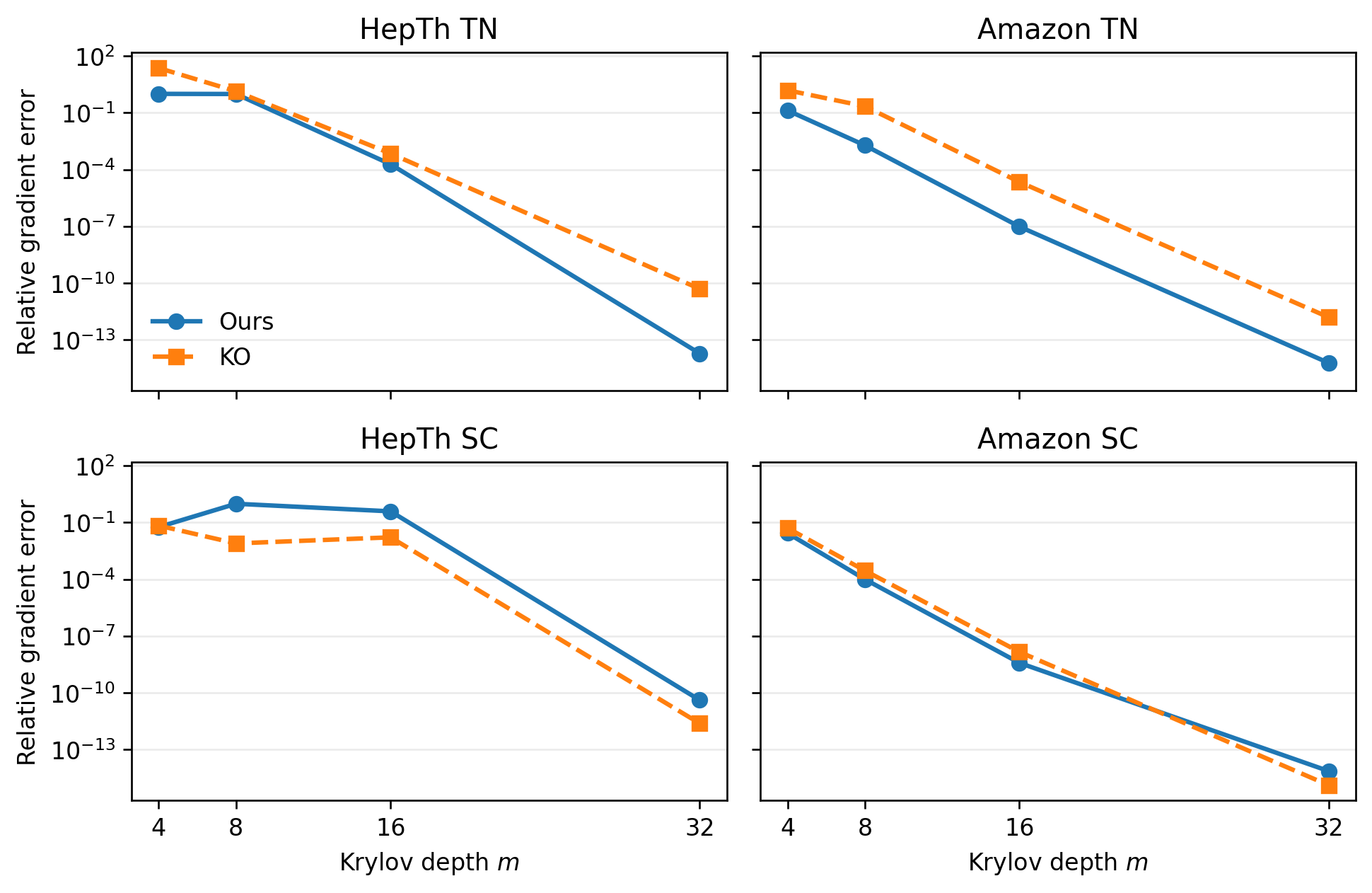}
    \caption{Relative gradient error versus Krylov depth \(m\) for TN and SC sensitivities on \texttt{ca-HepTh} and \texttt{com-Amazon}.}
    \label{fig:network-sensitivity-grad-error}
\end{figure}

\begin{figure}[t]
    \centering
    \includegraphics[width=0.7\linewidth]{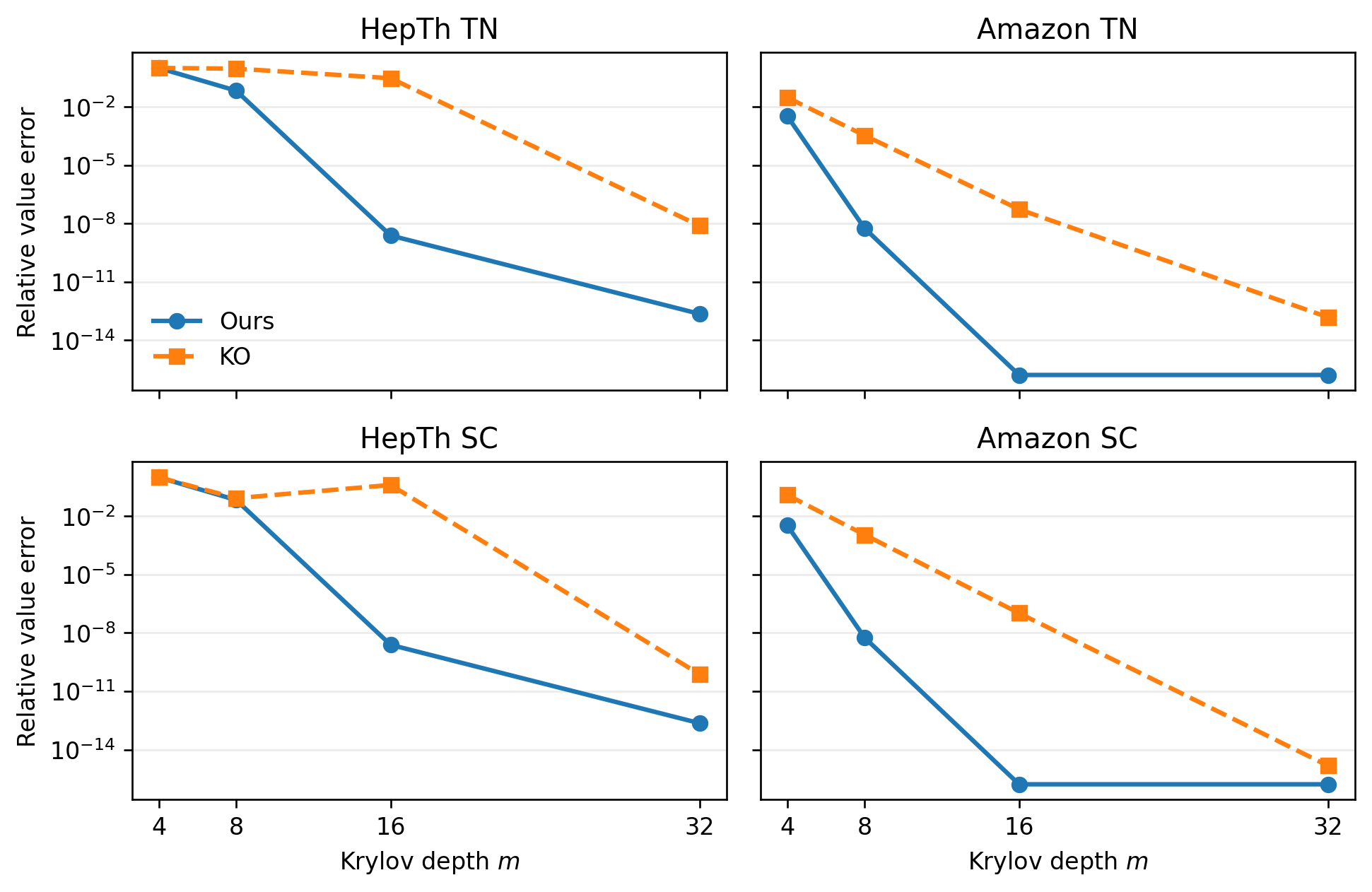}
    \caption{Relative value error versus Krylov depth \(m\) for TN and SC bilinear forms on \texttt{ca-HepTh} and \texttt{com-Amazon}.}
    \label{fig:network-sensitivity-value-error}
\end{figure}

\begin{figure}[t]
\centering
\begin{subfigure}[t]{0.48\linewidth}
  \centering
  \includegraphics[width=\linewidth]{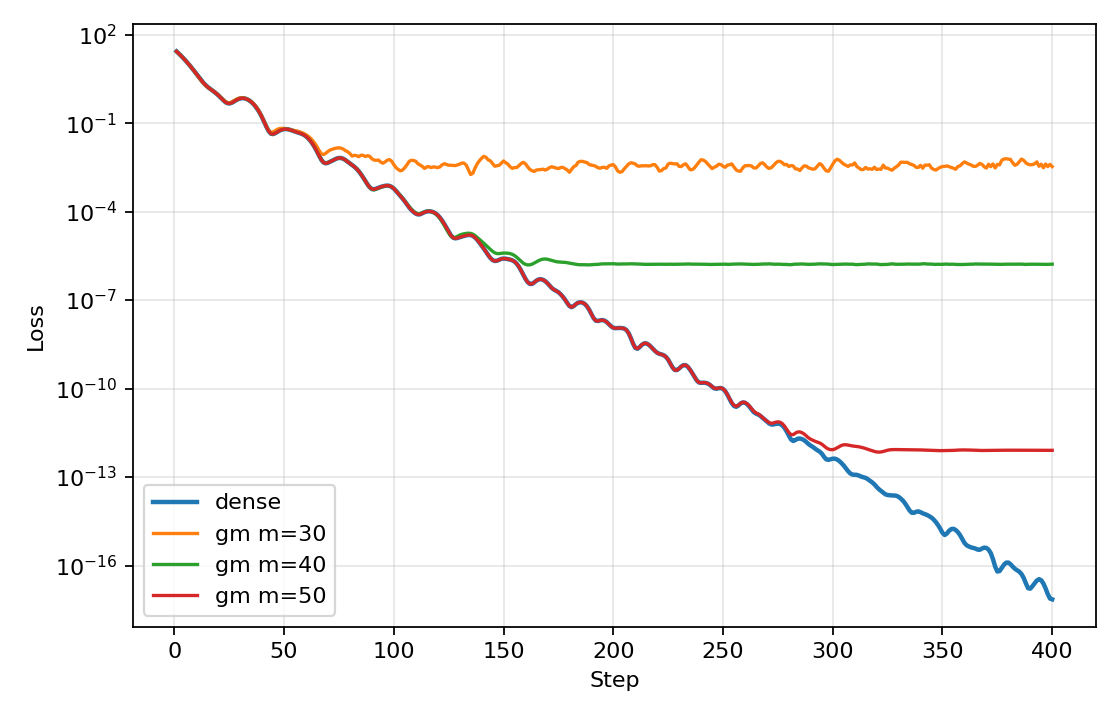}
  \caption{Training loss vs. optimization step.}
\end{subfigure}
\hfill
\begin{subfigure}[t]{0.48\linewidth}
  \centering
  \includegraphics[width=\linewidth]{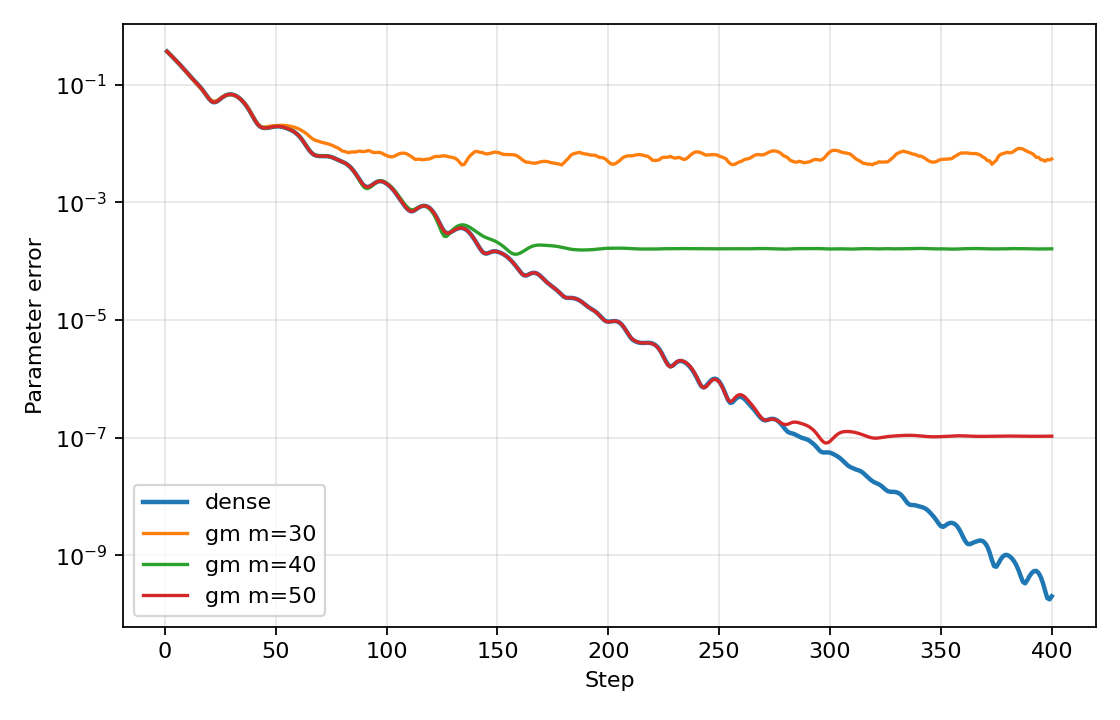}
  \caption{Parameter error vs. optimization step.}
\end{subfigure}
\caption{
Hamiltonian-learning convergence as a function of optimization step. Larger Krylov
depths more closely track the dense Fr\'echet-derivative baseline.
}
\label{fig:hamiltonian-learning-step}
\end{figure}

\subsection{Hamiltonian-learning results}
\label{app:hamiltonian-learning}

Figure~\ref{fig:hamiltonian-learning-step} shows the training loss and parameter
recovery error as functions of optimization step. The Krylov
gradient approximation exhibits a clear accuracy--cost tradeoff as the Lanczos depth
is varied. For $m=30$, the optimization initially follows the dense trajectory but
then plateaus, consistent with a non-negligible gradient approximation error. For
$m=40$, the method tracks the dense baseline for much longer and reaches a substantially
smaller final error. For $m=50$, the Krylov run nearly overlaps the dense loss and
parameter-error curves as a function of iteration, while converging much faster in time.

These results complement Table~\ref{tab:hamiltonian-learning}: increasing $m$ improves
the quality of the gradient approximation and therefore the attainable optimization
accuracy, while the Krylov methods remain substantially faster than the dense
Fr\'echet-derivative computation.

\end{document}